\documentclass[12pt]{article}
\usepackage[T1]{fontenc}
\usepackage{amsthm,amsmath,amssymb,mathrsfs,extarrows,mathtools}
\usepackage{graphicx}
\usepackage{subfig}
\usepackage{float}
\usepackage{indentfirst}
\usepackage{paralist}
\usepackage{booktabs}
\usepackage{multirow} % Required for multirows
\usepackage{multicol} % Required for multirows
\usepackage{enumerate}
\usepackage{enumitem}
\usepackage{mdwlist}
\usepackage{algorithm,algorithmicx}
\usepackage{bm}
\usepackage{comment}
\usepackage[numbers,sort&compress]{natbib}
\graphicspath{{pic/}}
\usepackage[colorlinks,linkcolor=blue,anchorcolor=blue,citecolor=red]{hyperref}
\makeatother
\newtheorem{theorem}{Theorem}[section]

\newtheorem{lemma}[theorem]{Lemma}

\newtheorem{remark}[theorem]{Remark}

\numberwithin{equation}{section}
\newcommand{\me}{\mathrm{e}}
\newcommand{\mi}{\mathrm{i}}
\newcommand{\ms}{\mathbb{S}}
\newcommand{\mr}{\mathbb{R}^2}
\newcommand{\uinc}{{u^{\rm inc}} }
\newcommand{\usc}{u^{\rm sc}}
\newcommand{\mn}{{K}}
\newcommand{\sn}{\mathcal{S}_{K}}
\newcommand{\tsn}{\tilde{\mathcal{S}}_{K}}

\begin{document}
    \title{A covariance-based reduced-order framework for solving acoustic scattering problems}
	\date{}
    \author{Shiwei Sun\thanks{\footnotesize Department of Mathematics, HKUST, Clear Water Bay, Kowloon, Hong Kong (swsunhk@ust.hk).},\; Hai Zhang\thanks{\footnotesize Department of Mathematics, HKUST, Clear Water Bay, Kowloon, Hong Kong (haizhang@ust.hk).}\; and Jinrui Zhang\thanks{\footnotesize Department of Mathematics, HKUST, Clear Water Bay, Kowloon, Hong Kong (jinruizhang@ust.hk).}
}
		
	\maketitle
	\begin{abstract}
This paper presents a physics-aware reduced-order method (ROM) for the efficient computation of wave-scattering problems. Standard model order reduction techniques, typically treating scattering as generic parameterized systems, frequently overlook the underlying physical structure, limiting their effectiveness in practice. To address this limitation, we propose an algorithmic framework that utilizes the intrinsic low-rank structure of the induced contrast source density. By modeling the incident wave as a random variable governed by a specified prior probability measure, we formulate the contrast source as a spatial random field whose covariance function captures essential spatial correlation and physical interactions. The reduced-order basis is then constructed via the Karhunen-Loève (KL) expansion, effectively extracting the dominant features from the scattering process to resolve multiple scattering scenarios. A central algorithmic contribution is the efficient reconstruction of the covariance matrix for arbitrary scatterer geometries and specified incident wave priors. To circumvent the prohibitive computational cost of assembling high-fidelity covariance matrices, we introduce a non-intrusive, physics-informed graph neural network (GNN) surrogate to learn the complex mapping from scatterer geometry to the source correlation kernel, enabling a highly efficient offline-online computational paradigm suitable for large-scale scattering configurations. Extensive numerical experiments demonstrate that the proposed framework achieves robust computational acceleration over full-order models without sacrificing accuracy. The advantages of our approach are twofold: it provides a robust acceleration framework for numerical scattering solvers that incorporate both the geometric information of scatterers and the prior information of incident waves, and it offers physical insights into the underlying multiple scattering mechanisms. 
		\vspace{1em}
		
		\noindent\textbf{Keywords.} {reduced-order method, contrast source, covariance function, graph neural network.}
	\end{abstract}
	
	% \maketitle
	
	\section{Introduction}
Wave scattering simulation aims to reconstruct the scattered field generated by a given scatterer under incident wave illumination. It plays a fundamental role in a wide range of scientific and engineering disciplines, including material design, radar detection, and geological exploration. This paper focuses on the two-dimensional acoustic scattering problem for sound-soft obstacles. Let \(\{D_i\}_{i=1}^{N_D}\) denote a cluster of disjoint sound-soft obstacles, with their union defined as $D:= \cup^{N_D}_{i=1} D_i$ and the total boundary as $\Gamma:= \cup^{N_D}_{i=1} \partial D_i$. For an incident wave $\uinc$, the total field $u$, composed of $\uinc$ and the scattered field $\usc$, satisfies the Helmholtz equation
\begin{equation}\label{equ:helm}
\Delta u+k^2u = 0 \quad  {\rm in}~\mr\setminus\overline{D},
\end{equation}
and the boundary condition
\begin{equation}\label{equ:boundary}
u=0 \quad {\rm on}~\Gamma.
\end{equation}
The objective is to efficiently compute the scattered field $\usc$ corresponding to the given scatterer $D$, and the incident wave $\uinc$. Numerically solving such problems relies on various discretization schemes, such as the finite-difference (FD) method \cite{FDTD-2005}, the finite element method (FEM) \cite{Berenger-1994}, the boundary element method (BEM) \cite{BEM-2006}, and emerging deep learning-based approaches \cite{maLearningBasedFastElectromagnetic2021,yaoEnhancedDeepLearning2024}. Among them, the BEM is particularly advantageous by expressing the solution in the form of a boundary integral, as it inherently satisfies the Sommerfeld radiation condition obeyed by the scattered field and reduces the problem dimension by concentrating unknowns on the boundary. However, this dimension reduction comes at the expense of generating dense system matrices, which require particular processing schemes to accelerate computation. One of the most classical and effective techniques is the fast multipole method (FMM) \cite{FMM-1992, FMM-2000}, which accelerates matrix-vector products by leveraging a hierarchical strategy to separate near and far-field interactions. Nevertheless, many practical scenarios necessitate a vast number of simulations for the same obstacle under different incident waves, leading to prohibitive computational costs. For example, the iteration methods for inverse problems often solve the full-order model repeatedly. This underscores the critical need for efficient strategies to achieve rapid many-query simulations.

Reduced-order method (ROM) is a powerful technique for simplifying complex high-dimensional mathematical systems. Its objective is to construct a lower-dimensional surrogate model that can capture the essential features of the original problem. In recent years, ROMs have been extensively studied for parametric differential equations \cite{quarteroniReducedBasisMethods2016,hesthavenCertifiedReducedBasis2016,bennerSurveyProjectionBasedModel2015} and related inverse problems \cite{gaoOptimalDesignBroadband2025}. Within the regime of wave equations, theoretical foundations such as parametric holomorphy have established dimension-independent bounds for the Kolmogorov width \cite{rivaShapeAnalyticitySingular2022,henriquezShapeUncertaintyQuantification2021}, which in turn implies dimension-independent convergence rates for model reduction. Furthermore, the studies \cite{georgakisNovelNumericalBasis2022,ganReducedOrderModelEquivalence2019} numerically confirm the low-rank nature of scattering operators. The reduced basis method (RBM), a classical strategy of ROM, aims to split the computation into an expensive, one-time offline phase and a cheap, fast online phase. Crucially, the efficiency of the online phase is realized if the assembly of the reduced system is independent of the full-order dimension and depends solely on the size of the reduced basis. This is because the reduced matrices and their associated operators are pre-computed and stored in the offline process. Pioneering work \cite{faresReducedBasisMethod2011} employed a greedy algorithm for basis selection with rigorous a posteriori error estimation. For broadband simulations, the proper orthogonal decomposition (POD) based on the singular value decomposition (SVD) of high-fidelity solution snapshots at sampled parameters is a popular alternative \cite{jiangReducedbasisBoundaryElement2019}. To construct an efficient affine expansion of the parameterized system, one line of work utilizes Taylor expansion to achieve a decoupled form \cite{zhongReducedorderBoundaryElement2024}, while another employs the empirical interpolation method (EIM) \cite{barraultEmpiricalInterpolationMethod2004} for non-intrusive, solver-agnostic ROM assembly \cite{edelLocallyAdaptiveNonintrusive2024}. Recent advances also explore non-intrusive approaches using convolutional autoencoders and interpolation techniques \cite{heModelOrderReduction2023,liNonIntrusiveReducedOrderModeling2021}. In multiple scattering, the RBM has been applied to 3D obstacles \cite{ganeshReducedBasisMethod2012} and elastic fractures \cite{henriquezReducedBasisMethod2024}, while T-matrix method as a classical ROM is adopted to solve large-scale scattering problems \cite{laiFastInverseElastic2022}. Although its convergence theory \cite{ganeshConvergenceAnalysisParameter2012} is well-established for simple shapes like spheres, its reliance on spherical harmonics as a universal basis leads to significantly deteriorated convergence for irregular geometries, especially for the near-field computation close to the boundary.

It is evident that ROMs have garnered increasing interest in scattering problems. However, many applications merely transplant generic parameter reduction techniques without deeply integrating the underlying physics of scattering. Departing from this trend, our work proposes a physics-aware ROM for scattering problems of plane wave incidence. The main work of this paper is summarized as follows.
\begin{itemize}
\item \textbf{A covariance-based reduced-order framework--CB-ROM--for solving acoustic scattering problems.} We model the incident wave as a random variable governed by an \emph{a priori} distribution. Consequently, the induced density on the boundary, hereafter referred to simply as the contrast source, is treated as a spatial random field. The statistical covariance function of the contrast source can be estimated from an ensemble of snapshots. The contrast source then admits a Karhunen-Lo\'eve (KL) expansion, where the basis functions are determined directly by the eigendecomposition of the covariance operator. This naturally yields a physics-aware ROM by adopting the dominant eigenfunctions as the reduced basis. Within this statistical framework, we rigorously establish the convergence and numerical stability of the proposed CB-ROM. Notably, the covariance function offers physical insights into the underlying multiple scattering mechanisms. Furthermore, this framework can be readily extended to other classes of wave propagation, including elastic and electromagnetic scattering.

\item \textbf{Three numerical schemes for covariance construction:} The core ingredient of the CB-ROM is the accurate and efficient construction of the covariance function for a specified scatterer and \emph{a priori} distribution of the incident waves. To achieve this, we investigate three numerical strategies. First, we employ a data-driven Proper Orthogonal Decomposition (POD) method, which constructs the empirical covariance matrix from high-fidelity contrast source snapshots generated by sampling the incident waves. While highly accurate, this approach incurs a prohibitive offline computational burden due to the repeated evaluation of the full-order model (FOM). To mitigate this exorbitant offline cost, we propose a second approach utilizing a physics-informed analytical ansatz for the covariance function, which efficiently captures the low-frequency asymptotic behavior of the contrast source. Finally, to achieve high fidelity without sacrificing efficiency, we introduce a physics-informed graph neural network (GNN) surrogate to learn the covariance matrix. This learning-based framework offers two critical advantages: (i) it provides a robust, non-intrusive mechanism to predict the covariance matrix for scatterers directly from their geometric parameterizations, and (ii) it facilitates a highly efficient offline-online computational paradigm for varying geometries, significantly broadening the practical applicability of the CB-ROM.

\item \textbf{Extensive numerical validation of the CB-ROM.} We first construct the covariance matrix using a data-driven POD approach, considering both full- and limited-aperture incidence settings. The resulting CB-ROM is then employed to compute the near fields for configurations involving single and multiple scatterers. To assess numerical stability, we evaluate the performance of the CB-ROM when subjected to a noisy covariance matrix. The numerical results demonstrate that the CB-ROM accurately solves the scattering problems while achieving a significant computational speedup over the conventional BIE solver. Subsequently, we validate the proposed graph neural network (GNN) approach. The trained GNN is utilized to predict the covariance matrices for both representative test samples and out-of-distribution (OOD) geometries. These results confirm that the GNN yields high-fidelity covariance reconstructions for unseen scatterers and exhibits strong generalization capabilities.

\end{itemize}

The rest of the paper is structured as follows. In Section \ref{Sec:Math-Model}, we give the mathematical formulation of the acoustic scattering problem and the conventional BIE method, thereby motivating the investigation of the ROMs. A statistical formulation of scattering problems is described in Section \ref{sec:covfun}, where we define the covariance function for the contrast source, and then derive its explicit form for a special case of a unit disk scatterer. In Section \ref{sec:CB-ROM}, we develop the CB-ROM based on the KL expansion of the contrast source. The convergence analysis of the CB-ROM is also included in this section. Then, in Section \ref{sec:podgnn}, three approaches, including the classical random POD method, a physics-informed formula and a novel learning-based GNN method, are introduced to construct the covariance matrix. Moreover, we investigate the numerical stability of our CB-ROM under a noisy covariance matrix. Extensive numerical examples are enclosed in Section \ref{sec:numerical}, to validate the effectiveness and efficiency of the proposed methods. Finally, some concluding remarks and perspectives for future work are given in Section \ref{sec:conclusion}.

\section{Mathematical formulation}\label{Sec:Math-Model}
\setcounter{equation}{0}
In this section, we establish the mathematical formulation of the governing acoustic scattering problem. While our exposition focuses on two-dimensional scattering by sound-soft obstacles under plane wave incidence, the proposed framework readily extends to other configurations, including sound-hard boundaries and point-source excitations. Subsequently, we introduce the scattering problem, review the classical boundary integral equation (BIE) formulation, and the related ROMs. 

\subsection{Acoustic scattering problems}\label{Sec:Direct-Problem}
Let \(D \subset \mathbb{R}^2\) be a bounded domain representing a sound-soft scatterer, and let \(\Gamma:= \partial D\) denote its boundary, equipped with the unit outward normal vector \(\nu\). We assume that the exterior domain, \(\mathbb{R}^2 \setminus \overline{D}\), is filled with a homogeneous and isotropic medium. Let the scatterer be illuminated by a time-harmonic plane wave $\uinc(x;d) =   \me^ {\mi kx\cdot d}$, where $k$ and $d\in\ms$ represent the wave number and the incident direction of the plane wave, respectively. The resulting total field $u$ satisfies the Helmholtz equation \eqref{equ:helm} and the boundary condition \eqref{equ:boundary}. It is well-known that the fundamental solution of the Helmholtz equation in the two-dimensional case is given by
\begin{eqnarray}\label{equ:green}
	G(x,y) = \frac{\mi}{4}H_0^{(1)}(k|x-y|).
\end{eqnarray}

Note that $\uinc$ satisfies the homogeneous Helmholtz equation \eqref{equ:helm}. Hence, we have
\begin{equation}\label{equ:sca}
	\begin{cases}
		\Delta\usc+k^2\usc=0\quad                                                & {\rm in}~
		\mr\setminus\overline{D},                                                                   \\
		% \frac{\partial\usc}{\partial\nu}=-\frac{\partial\uinc}{\partial\nu}\quad & {\rm on}~\Gamma.
        \usc = -\uinc\quad & {\rm on}~\Gamma.
	\end{cases}
\end{equation}
In addition, to ensure the uniqueness, the scattered field $\usc$ is typically required to satisfy the Sommerfeld radiation condition
\[
	\lim_{r\to\infty}\sqrt{r}(\partial_r\usc-\mi k\usc)=0,\quad r=|x|.
\]
% We call $u^{\infty}$ the far field pattern if 
% \begin{equation}\label{equ:farfield}
% 	\usc(x) = \frac{ \me^{\mi k |x|}}{\sqrt{|x|}} u^{\infty}(\hat{x})+\mathcal{O}\left(|x|^{-\frac{3}{2}}\right), \quad x\to \infty
% \end{equation}
% with $\hat{x}:=x/|x|\in \ms$. 

The forward problem of our interest, which arises in wave simulations and wide industrial applications, can be stated as follows.

\noindent \textbf {Many-query simulations: } Given the obstacle set $D$ and a large number of varying incident plane waves $\uinc$, determine the scattered field $\usc$ in the region of interest.
% or the corresponding far field pattern $u^{\infty}$. 

\subsection{BIE and related RBMs}\label{subsec:BIE-ROM}
Various well-established methods have been developed to solve the above forward scattering problem; see, for instance, \cite{Colton-Kress-2013, Berenger-1994, Mishchenko-1996-55} and the references therein. We pay particular attention to the BIE, in which the scattered field is expressed as a boundary integral. More specifically, the scattered field $\usc$ can be represented via the following single-layer potential
\begin{eqnarray}\label{equ:integratialrep}
	\usc(x) =\int_{\Gamma} G(x,y)\phi(y) ds_y, \quad x\in 	\mr\setminus\overline{D}
\end{eqnarray}
with an unknown density function $\phi$, or the combined layer potential
\begin{eqnarray}\label{equ:integratialrep-com}
	\usc(x) =\int_{\Gamma} \left(\frac{\partial G(x,y)}{\partial \nu(y)}+\mi\eta G(x,y)\right)\phi(y) ds_y, \quad x\in 	\mr\setminus\overline{D}
\end{eqnarray}
with a prescribed real coupling parameter $\eta$ and an unknown density function $\phi$. Unless otherwise specified, we use the single-layer representation \eqref{equ:integratialrep}. 
% Based on the asymptotic behavior of the fundamental solution, we have
% \begin{equation}\label{equ:ffp}
% 	u^{\infty}(\hat{x}) = \frac{  \me^{\mi\pi/4}}{\sqrt{8k\pi }}\int_{\Gamma}    \me^{-\mi k \hat{x}\cdot y} \phi(y) ds_y,\quad \hat{x}\in \ms. 
%  \end{equation}

Hence, the key point of solving the forward problem is to find the density function $\phi$. From the layer potential theory \cite{Colton-Kress-2013}, we are allowed to access the density function by solving
\begin{equation}\label{eq:boundary}
\int_{\Gamma} G(x,y)\phi(y;d) ds_y = -\uinc(x;d),\quad x\in \Gamma.
\end{equation}

Numerically, after adopting Nystr\"om method \cite{canino-1998} with the involved singular integral properly tackled \cite{baoSingularitySwappingMethod2024},    \eqref{eq:boundary} can be discretized as the following finite-dimensional linear system 
\begin{align}\label{eq:highf}
    \mathbb {G} \bm{\phi}=\bm{b}, \qquad \mathbb{G}\in \mathbb{C}^{N\times N}, \,\bm{b}\in \mathbb{C}^{N},
\end{align}
where the vector $\bm{\phi}\in \mathbb{C}^{N}$ to be solved consists of the values of $\phi$ at $N$ discrete points on $\Gamma$. 
% Solving the system generally incurs a computational complexity of order $\mathcal O(N^3)$. 
When multiple incident directions are involved, the cost becomes repetitive and poses a significant computational burden. The RBMs offer an effective direction to mitigate this issue. The core component of the RBMs is to assume that $\bm{\phi}$ lies in a low-rank space with a basis $\{v_l\}^K_{l=1}$ with $v_l\in \mathbb{C}^{N}$ and $K<N$. Then, there exists a coefficient vector $c_\phi\in\mathbb{C}^K$ such that
\begin{equation*}\label{eq:appro}
    \bm{\phi} = \mathbb{V} c_\phi,
\end{equation*}
where the transformation matrix $\mathbb{V}:=[v_1,\,\ldots,v_K]\in \mathbb{C}^{N\times K}$ is independent of the incident directions. 
% In this case, the high-fidelity system \eqref{eq:highf} is reduced into a lighter surrogate model 
% $\mathbb{G}\mathbb{V}c_\phi = \bm{b}.$
% \begin{equation}\label{eq:surrogatemodel}
%     % \mathbb{G}^{\rm ROM} c_\phi = \bm{b},
% \end{equation}
% with $\mathbb{G}^{\rm ROM}:= \mathbb{G}\mathbb{V}\in \mathbb{C}^{N\times L}$. 
% Once $\mathbb{V}$ is obtained, we are allowed to solve the reduced model with a computational complexity of $\mathcal O(NL^2)$, which significantly improves the efficiency if $L<<N$. 

In projection-based ROMs \cite{bennerSurveyProjectionBasedModel2015}, the low-dimensional unknown $c_\phi$ is typically obtained by enforcing a suitable orthogonality condition to ensure stability and accuracy. Specifically, consider a test matrix $\mathbb{W}\in \mathbb{C}^{N\times K}$ with full column rank, a well-known criterion for determining $c_\phi$ is to enforce the Petrov-Galerkin condition, namely,
\begin{align}\label{cbd}
    \mathbb{W}^\top(\bm{b}-\mathbb{G}\mathbb{V} c_\phi)=0,
\end{align}
which can be interpreted as a weighted residual formulation. In particular, if $\mathbb{V}$ consists of POD bases and $\mathbb{W}=\mathbb{V}$, we are led to the classical POD-Galerkin ROM
\begin{align}\label{rom1}
    \mathbb{G}^{\rm ROM} c_\phi = \mathbb{V}^\top\bm{b},\quad \mathbb{G}^{\rm ROM}=
    \mathbb{V}^\top \mathbb{G}\mathbb{V}\in\mathbb{C}^{K\times K}.
\end{align}
 Compared with \eqref{eq:highf}, it can be seen that ROM saves considerable computational cost for many-query simulations if the high-quality basis subspace $\mathbb{V}$ is obtained.
 % satisfying $K<<N$. 
% \begin{remark}\label{cost}
%     \textbf{Comparison of computation cost between adopting BIE directly and ROM:} Assume that there exists $Q$ incidence. If BIE is adopted directly, denote the number of discretized points for given  precision $\epsilon$ by $N$, since the inverse of $\mathbb{G}$ in \eqref{eq:highf} can be precomputed in the offline stage at one-time cost $\mathcal{O}(N^3)$, then for each incidence, the online computation cost is $\mathcal{O}(N^2)$.
%     For the reduced-order model \eqref{rom1}, suppose that $K$ bases are choosed, in the offline stage, firstly precompute $\mathbb{G}^{\rm ROM}$
%     at cost $\mathcal{O}(N^2K)$, and then the inverse of $\mathbb{G}^{\rm ROM}$ at cost $\mathcal{O}(K^3)$.  In the online procedure, for each incidence, we need to project the incident wave into the basis space at cost $\mathcal{O}(NK)$, then the coefficients are computed at cost $\mathcal{O}(K^2)$, totally $\mathcal{O}(NK+K^2)$. Thus it can be seen that ROM saves considerable computational cost for many large-scale scattering systems where $N>>K$.  
% \end{remark}

Before closing this section, we note that although the BIE for a smooth boundary without ROM can achieve exponential convergence as the number of discrete points increases, solving the full discretized system entails a substantial computational burden, especially in large-scale scattering problems or high-frequency problems. This change can be effectively addressed through the ROMs. Nevertheless, traditional ROMs often suffer from limited physical interpretability, lack rigorous theoretical numerical analysis, or fail to incorporate scatterer-dependent information. In the following, we propose a physics-informed ROM to address the aforementioned limitations.

% \section{Physical interpretation of ROM: correlation of density functions}
\section{Covariance function of the contrast source}\label{sec:covfun} 
\setcounter{equation}{0}
This section establishes the statistical framework for the scattering problem. We begin by defining the covariance function associated with the contrast source. To provide concrete insights into this formulation, we derive explicit expressions for the covariance function of a unit disk scatterer under both full- and limited-aperture incidence configurations.

\subsection{Statistical formulation}\label{31}
For plane wave incidence, the propagation direction is parameterized by the angle \(\alpha\), such that \(d:= (\cos \alpha, \sin \alpha)\). We model the incidence angle \(\alpha\) as a random variable uniformly distributed over a specified aperture \((a, b) \subseteq [0, 2\pi)\). Consequently, the contrast source \(\phi(y; \alpha)\) for \(y \in \Gamma\), introduced in \eqref{eq:boundary}, constitutes a spatial random field parameterized by \(\alpha\). The pointwise mean function of this field is defined as
\begin{equation}
    \mu(y) := \mathbb{E}_{\alpha \sim \mathcal{U}(a, b)}[\phi(y; \alpha)], \quad y \in \Gamma,
\end{equation}
where $\mathcal U(a,\,b)$ indicates the uniform distribution over $[a,\,b]$. The covariance function and correlation function of the contrast source are then specified as 
\begin{equation}\label{eq:covariance}
%     \mathcal C(x,y)=\mathbb{E}_{\theta\sim \mathcal U(a,\,b)}\left[\left(\phi(x;\,\theta)-\mu(x)\right)\overline{\left(\phi(y;\,\theta)-\mu(y)\right)}\right],\quad x\in\Gamma, \,y\in\Gamma
% \end{equation}
    \mathcal C(x,y)=\mathbb{E}_{\alpha\sim \mathcal U(a,\,b)}\left[\left(\phi(x;\,\alpha)-\mu(x)\right)\overline{\left(\phi(y;\,\alpha)-\mu(y)\right)}\right],\quad x\in\Gamma, \,y\in\Gamma
\end{equation}
and 
\begin{equation*}\label{eq:correlation}
    {\rm Cor}(x,y)=\frac{\mathcal C(x,y)}{\sqrt{\mathcal C(x,x)\mathcal C(y,y)}},\quad x\in\Gamma, \,y\in\Gamma.
\end{equation*}
The correlation function serves as the normalized, dimensionless counterpart to the covariance function, quantifying the spatial correlations within the random field. We emphasize that both functions depend intrinsically on the geometric properties of the scatterer and the \emph{a priori} distribution of the incident waves. 

Note that all multiple scattering effects are encoded in the covariance $\mathcal C(x,y)$. From a physical perspective, the values $\phi(x;\,\alpha)$ and $\phi(y;\,\alpha)$ are strongly correlated if $x,\,y\in \Gamma$ are close to each other, and the correlation decays as the distance of the two points increases. This essential observation reveals an inherent low-rank structure in the density functions. To extract it explicitly, we denote the spectrum for the covariance function by $(\lambda_n,\phi_n(y))$, which satisfies
\[\int_{\Gamma}\mathcal C(x,y)\phi_n(y)ds(y)=\lambda_n\phi_n(x).\]
Then, the random field $\phi(y;\,\alpha)$ admits the following Karhunen-Lo\'eve (KL) expansion,
\begin{align}\label{KL0}
    \phi(y;\,\alpha) = \mu(y)+\sum_{n=1}^{\infty}\sqrt{\lambda_n}\phi_n(y){z_n(\alpha)}, \quad y\in\Gamma,
\end{align}
where $z_n(\alpha),\,n=1,\cdots,\infty$ are independent standard normal random variables. 

Although a closed-form expression of $\mathcal C(x,y)$ for a general-shaped scatterer is unavailable, a numerical approximation for the covariance matrix, and therefore the correlation matrix, can be achieved through random sampling. To illustrate the low-rank structure of the contrast source, we present the magnitude of the correlation matrix for a pentagram-shaped scatterer and the decay of corresponding eigenvalues in Figure \ref{f2}. Here, we use the incident wave with the wave number $k=2\pi$, and discretize $\Gamma$ using 500 points. The correlation heatmap clearly captures the five distinct features of the scatterer, and the eigenvalues exhibit a pronounced exponential decay. Figure \ref{f3} visualizes the imaginary parts of eigenfunctions corresponding to the first six eigenvalues. We observe that these dominant modes encapsulate the characteristic behavior of the density function, which exhibits a significant contrast between the concave and convex regions of the obstacle. 
\begin{figure}[H]
    \centering
    \subfloat[]{
    \includegraphics[width=0.3\linewidth]{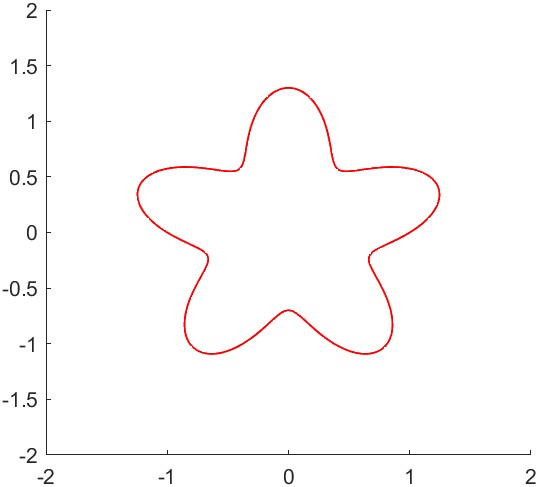}}
    \subfloat[]{
    \includegraphics[width=0.3\linewidth]{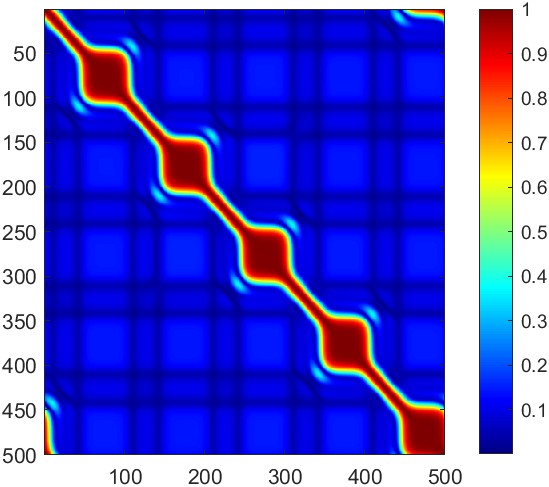}}
    \quad
    \subfloat[]{
    \includegraphics[width=0.3\linewidth]{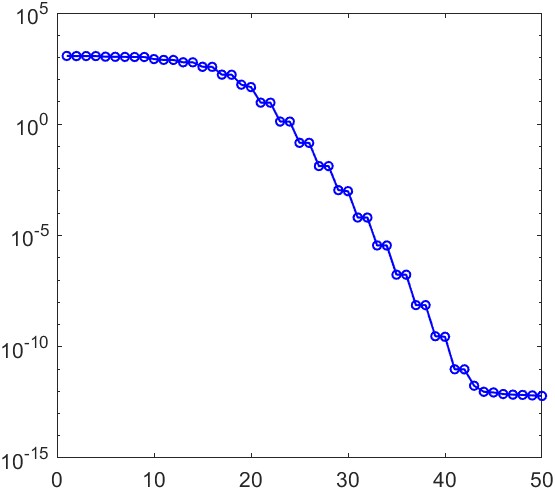}}
    \caption{(a) The pentagram-shape scatterer. (b) The magnitude of the corresponding correlation matrix. (c) The decay of the corresponding eigenvalues.}
    \label{f2}
\end{figure}

\begin{figure}[H]
    \centering
    \includegraphics[width=0.85\linewidth]{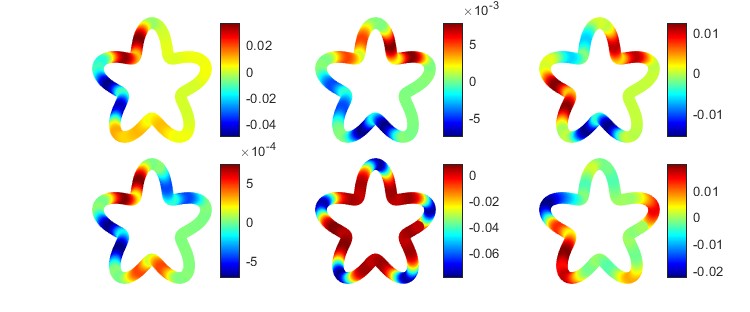}
    \caption{Principal modes: the eigenfunctions corresponding to the first six eigenvalues. }
    \label{f3}
\end{figure}

\subsection{A special case: the covariance function for a unit disk scatterer}
In this subsection, we analyze the covariance function for a special case where $D$ is a unit disk. For narrational convenience, we use $\phi(\theta; \,\alpha)$ to represent $\phi(y; \,\alpha)$ with $y=(\cos\theta,\,\sin\theta)$. In this setting, the density function admits a Fourier series expansion through the Mie scattering theory,
\[
\phi(\theta,\,\alpha) = \sum_{n=-\infty}^\infty a_n(\alpha)  \me^{\mi   n \theta},
\]
where
\[
a_n(\alpha) = A_n   \me^{-\mi n \alpha},\quad A_n=\frac{2 \mi^{n+1}}{\pi H_n^{(1)}(k)}.
\]
For later use, we define
\[
R(\theta,\,\theta') := \mathbb{E}_{\alpha\sim \mathcal U(a,b)} \left[ \phi(\theta;\alpha) \overline{\phi(\theta';\alpha)} \right]
= \mathbb{E}_{\alpha\sim \mathcal U(a,b)} \left[ \sum_{n} \sum_{m} A_n \overline{A_m} \me^{-\mi n \alpha} \me^{\mi n \theta} \me^{\mi m \alpha} \me^{-\mi m \theta'} \right],
\]
and the mean function
\[
\mu(\theta) = \mathbb{E}_{\alpha\sim \mathcal U(a,b)}[\phi(\theta;\alpha)] = \sum_n a_n  \me^{\mi   n \theta} \mathbb{E}_{\alpha\sim \mathcal U(a,b)}[ \me^{\mi  n \alpha}].
\]

Building on the preliminaries established above, we next analyze the covariance function of this scatterer under two scenarios: the covariance matrices for full-aperture and limited-aperture incidence.
\subsubsection{The full-aperture case}
We first consider the full-aperture incidence case, namely, $[a,\,b)=[0,\,2\pi)$. Then, we have
\[R(\theta, \theta') =\sum_{n} |A_n|^2   \me^{\mi   n (\theta-\theta')},\quad \mu(\theta)=A_0.
\]
Immediately, the explicit expression of the covariance function is given by
\begin{equation}\label{eq:covdisk}
  C(\theta,\,\theta') = R(\theta,\,\theta') - \mu(\theta) \overline{\mu(\theta')} = \sum_{n\neq 0} |A_n|^2   \me^{\mi   n (\theta-\theta') },\quad \theta,\,\theta' \in [0,\,2\pi).
\end{equation}
We can further derive the following KL expansion:
\begin{align}\label{KL}
    \phi(\theta;\,\alpha)=A_0+\sum_{n\neq 0}\sqrt{\lambda_n}\me^{\mi n\theta}z_n(\alpha),  \quad\lambda_n=2\pi|A_n|^2.
\end{align}

\begin{figure}[htbp]
    \centering
    \subfloat[]{
    \includegraphics[width=0.35\linewidth]{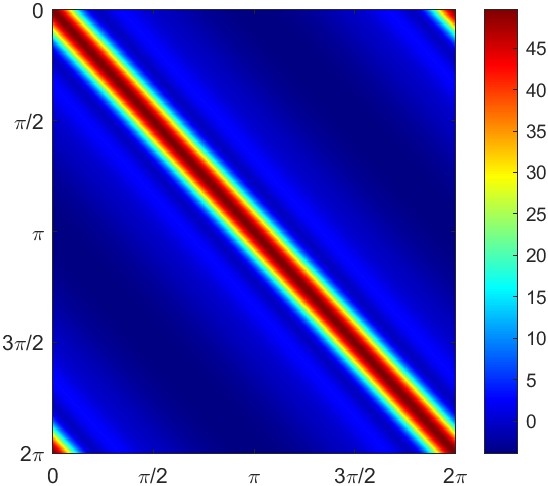}}
    \quad
    \subfloat[]{
    \includegraphics[width=0.35\linewidth]{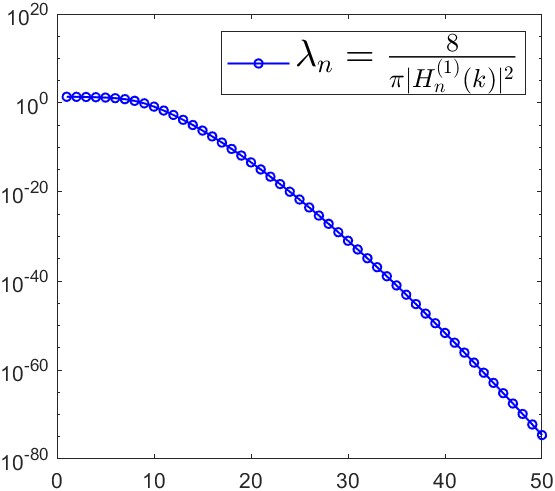}}
    \caption{(a) The magnitude of the covariance matrix \eqref{eq:covdisk} for a unit disk scatterer. (b) The eigenvalue distribution of this covariance function.}
    \label{f1}
\end{figure}

\begin{figure}[htbp]
    \centering
    \includegraphics[width=0.85\linewidth]{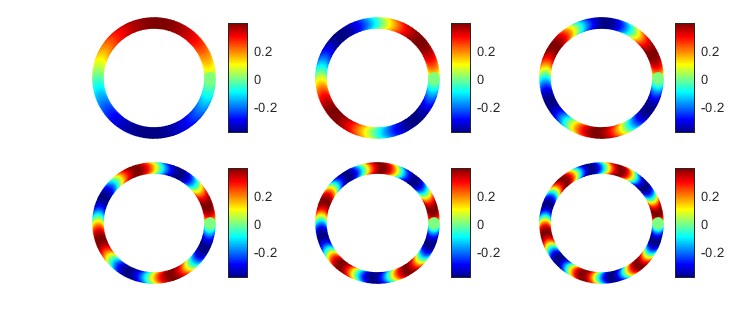}
    \caption{Principal modes: the eigenfunctions corresponding to the first six eigenvalues.}
    \label{f4}
\end{figure}
The magnitude of the covariance function \eqref{eq:covdisk} and its eigenvalues are depicted in Figure \ref{f1}. We can observe an obvious exponential decay of the eigenvalues due to the asymptotic behavior of the Hankel function $H_n^{(1)}(k)$ as $n$ increases. This decay shows a low-rank structure of the covariance function. Furthermore, the first six principal eigenfunctions are visualized in Figure \ref{f4}. It is evident that the eigenfunctions of smaller eigenvalues capture higher-frequency components of the contrast source.

\subsubsection{The limited-aperture case}
Assume that $\alpha$ obeys a uniform distribution in the intervel $(a,\,b)\Subset (0,\,2\pi)$. We first define 
\[
\Phi^{(a,b)}_{m-n}:=\mathbb{E}_{\alpha\sim \mathcal U(a,b)} [ \me^{\mi (m-n) \alpha} ] = \frac{1}{b-a} \int_a^b \me^{\mi (m-n) \alpha} d\alpha = 
\begin{cases}
\me^{\mi \frac{(a+b)(m-n)}{2} } \operatorname{sinc}\left(\frac{(m-n)(b-a)}{2}\right), & m\neq n, \\
1, & m=n.
\end{cases}
\]
Then, we have
\[
R(\theta, \theta') = \sum_{n} \sum_{m} A_n \overline{A_m} \, \Phi^{(a,b)}_{m-n} \, \me^{\mi n \theta} \me^{-\mi m \theta'},\quad \mu(\theta) =\sum_{n=-\infty}^\infty A_n \overline{\Phi^{(a,b)}_n }\me^{\mi n \theta}.
\]
As a result, the covariance function in the case of a limited aperture $(a,b)$ is
\begin{align}\label{limap}
    C(\theta, \theta') = R(\theta, \theta') - \mu(\theta) \overline{\mu(\theta')} = \sum_{n,m} A_n \overline{A_m} \left[ \Phi^{(a,b)}_{m-n} -  \overline{\Phi^{(a,b)}_n }\Phi^{(a,b)}_m \right] \me^{\mi n \theta - \mi m \theta'}.
\end{align}
Figure \ref{f5} depicts a unit disk scatterer with a limited incidence aperture $[a,\,b] =[\frac{3\pi}{4},\frac{5\pi}{4}]$ and the eigenvalue distribution of the corresponding covariance function. Compared with the full-aperture case, the rate of decay is faster, which suggests that fewer dominant modes are present in this limited-aperture scenario.
\begin{figure}[htbp]
    \centering
    \subfloat[]{
    \includegraphics[width=0.35\linewidth]{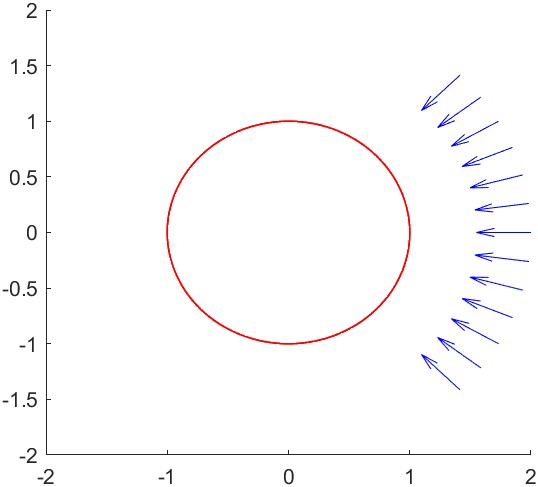}}
    \quad
    \subfloat[]{
    \includegraphics[width=0.35\linewidth]{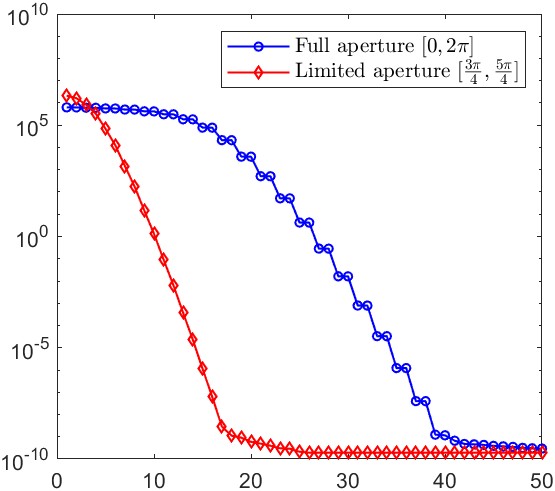}}
    \caption{(a) Unit disk with limited-aperture incidence. The blue arrows indicate the incidence range. (b) The decay of the corresponding eigenvalues compared with the full-aperture.}
    \label{f5}
\end{figure}

\begin{remark}
In the limited case that $\alpha\sim U(\alpha_0-\epsilon,\alpha_0+\epsilon)$ and let $\epsilon\rightarrow 0$. Using the second order approximation $\operatorname{sinc}(x)\approx 1-\frac{x^2}{6}$ in \eqref{limap}, we obtain that 
\[
C(\theta, \theta') \approx
% \frac{\epsilon^2}{3} \sum_{n,m} n m \, A_n \overline{A_m} \me^{\mi n (\theta - \alpha_0) - \mi m (\theta' - \alpha_0)}=
\frac{\epsilon^2}{3} \frac{\partial \phi}{\partial \alpha}(\theta;\alpha_0) \overline{\frac{\partial \phi}{\partial \alpha}(\theta';\alpha_0)}, \quad \frac{\partial \phi}{\partial \alpha}(\theta;\alpha_0) =\frac{\partial \phi(\theta;\alpha)}{\partial \alpha}\bigg|_{\alpha_0} = -\mi \sum_n n A_n \me^{\mi n (\theta - \alpha_0)},
\]
which means that only one nonzero eigenvalue remains and the corresponding eigenfunction is $\frac{\partial \phi}{\partial \alpha}(\theta;\alpha_0)$. It can be deduced that the contrast source under the incident direction near $\alpha_0$ approximately lies in the space spanned by the mean value $\phi(\theta;\alpha_0)$ and the eigenfunction $\frac{\partial \phi}{\partial \alpha}(\theta;\alpha_0)$ since  
\[\phi(\theta;\alpha_0+\delta)\approx \phi(\theta;\alpha_0)+\delta \frac{\partial \phi}{\partial \alpha}(\theta;\alpha_0).\]
\end{remark}

\section{Covariance-based reduced-order method (CB-ROM)}\label{sec:CB-ROM}
\setcounter{equation}{0}
In this section, we propose a covariance-based reduced-order method (CB-ROM), which adopts the leading eigenfunctions in the KL expansion of the contrast source as the reduced bases. Furthermore, a detailed convergence analysis of the CB-ROM is present in the general statistical framework, where the rate of convergence depends on the smoothness of the covariance function.

\subsection{Mathematical framework}
We first introduce the FOM of our scattering problem. As mentioned in Section \ref{sec:covfun}, for a specific plane wave incidence $\uinc(\alpha)$ with direction $d = (\cos \alpha,\,\sin \alpha)$, the resulting density function in \eqref{eq:boundary} can be expanded as 
\begin{equation}\label{eq:phi-expansion}
\phi(y;\,\alpha)=\sum_{n=0}^\infty \tilde a_n(\alpha)\phi_n(y),\quad y\in\Gamma
\end{equation}
where $\tilde a_0(\alpha) = 1,\,\phi_0(y) = \mu(y)$, and $\phi_n(y),\,n=1,\,2,\,\ldots$ are the eigenfunctions of the corresponding covariance function. Define $S:L^2(\Gamma)\rightarrow L^2(\Gamma)$ to be the boundary integral operator
\[[S\phi](x) = \int_\Gamma G(x,y)\phi(y)ds(y), \quad x,\,y\in\Gamma.\]
Then, these unknown coefficients $\{\tilde a_i(\alpha)\}^\infty_{i=1}$ in\eqref{eq:phi-expansion} can be determined by solving an infinite-dimensional linear system 
\begin{align}\label{fm}
    \sum_{n=0}^{\infty}\left(\phi_j,\,S\phi_n\right)\tilde a_n(\alpha) = \left(-\uinc(\alpha),\phi_j\right),\quad j=0,\,1,\,\cdots,\,\infty,
\end{align}
where $(\cdot,\,\cdot)$ denotes the inner product in $L^2(\Gamma)$. 

The essence of our CB-ROM is to approximate the contrast source with the first ${K}$ eigenfunctions. Specifically, we replace the true contrast source with the truncated version
\begin{equation}\label{eq:phi-expansion-truncated}
    \phi^{\rm ROM}(y;\,\alpha)=\sum_{n=0}^{{K}-1}a_n^{\rm ROM}(\alpha)\phi_n(y),\quad y\in\Gamma.
\end{equation}
The unkonwn coefficients $\{a_n^{\rm ROM}(\alpha)\}^{{K}-1}_{n=0}$ are then determined by the following Galerkin system
\begin{align}\label{fm2}
    \sum_{n=0}^{{K}-1}\left(\phi_j,\,S\phi_n\right)a_n^{\rm ROM}(\alpha) = (-\uinc(\alpha),\phi_j),\quad y\in \Gamma,\,j=0,\,1,\,\cdots,\,{{K}-1}.
\end{align}
The above equation can be parameterized as  
\begin{align}\label{feq}
    T_{K}{\bm a}^{\rm ROM} = \bm{b},
\end{align}
where $T_{K}\in \mathbb{C}^{{K}\times {K}}$ with $(i,n)$-th entry $\left(\phi_j,\,S\phi_n\right)$, ${\bm a}^{\rm ROM} = [a_0^{\rm ROM}(\alpha),\,\cdots,\,a_{{K}-1}^{\rm ROM}(\alpha)]^T\in \mathbb{C}^{{K}}$ and $\bm b = [b_0(\alpha),\,\cdots,\,b_{{K}-1}(\alpha)]^T\in \mathbb{C}^{{K}}$ with $b_j(\alpha)=-\left(\uinc(\alpha),\phi_j\right)$. In practice, we need to discretize the boundary $\Gamma$ properly, after which the continuous formulation is reduced to the discrete linear system \eqref{cbd}.

\subsection{Convergence analysis}
In this subsection, we investigate the convergence analysis of the CB-ROM within the statistical framework. Recall that for plane wave incidence with random direction angle $\alpha$ uniformly distributed in $(a,\,b)\subseteq(0,\,2\pi)$, the yielding contrast source admits a KL expansion
\begin{align*}
    \phi(y;\,\alpha) = \mu(y)+\sum_{n=1}^{\infty}\sqrt{\lambda_n}\phi_n(y)z_n(\alpha)=:\sum_{n=0}^{\infty}\hat a_n(\alpha)\phi_n(y), \quad y\in\Gamma,
\end{align*}
where $\hat a_0(\alpha)=1,\,\phi_0(y)=\mu(y)$, and for $n=1,\,2,\,\ldots,\,\infty$, $\hat a_n(\alpha)$ are independent random functions of $\alpha$ with zero mean and variance $\lambda_n$. Accordingly, our CB-ROM approximation using the first ${K}$ dominant eigenfunctions is given by
\begin{equation}\label{eq:phi-ex-trun-ran}
    \phi^{\rm ROM}(y;\,\alpha)=\sum_{n=0}^{{K}-1}\hat a_n^{\rm ROM}(\alpha)\phi_n(y),\quad y\in\Gamma,
\end{equation}
where $\hat a_n^{\rm ROM}(\alpha),\,n=0,\,\ldots,\,{K}-1$ are random functions of $\alpha$. Then, we have the following convergence result.
\begin{theorem}
Let $\phi(y;\,\alpha)$ be the contrast source corresponding to plane wave incidence with random direction angle $\alpha \sim \mathcal{U}(a,b)$, and let $\phi^{\mathrm{ROM}}(y;\,\alpha)$ be its approximation obtained by the CB-ROM using the first ${K}$ dominant eigenfunctions. Then, we have
    \begin{align}\label{cva}
    \mathbb{E}_{\alpha\sim(a,\,b)} \left[\|\phi^{\rm ROM}(y;\,\alpha)-\phi(y;\,\alpha)\|_{L^2(\Gamma)}^2\right]\leq ({{K}\|T_{{K}}^{-1}\|^2_2}\|S\|^2+1)\varepsilon_{{K}}, \quad
    \varepsilon_{{K}} = \sum_{n={{K}}}^{\infty} \lambda_n,
\end{align}
where $\|S\|$ is the operator norm of the single-layer potential, and $\|T_{{K}}^{-1}\|_2$ denotes the spectral norm of the matrix $T_{{K}}^{-1}$. Both $\|S\|$ and $\|T_{{K}}^{-1}\|_2^2$ depend on the geometric complexity of the scatterer.
\end{theorem}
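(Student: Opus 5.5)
The plan is a Céa/Strang-type argument: split the error into the truncation error of the KL expansion plus the error in the first $K$ coefficients. Write $\phi=\phi_K+r_K$, where
\[
\phi_K:=\sum_{n=0}^{K-1}\hat a_n(\alpha)\phi_n, \qquad r_K:=\sum_{n\ge K}\hat a_n(\alpha)\phi_n .
\]
The eigenfunctions $\{\phi_n\}_{n\ge1}$ of the self-adjoint covariance operator are orthonormal. Assuming $\mu$ is handled so that $\{\phi_0,\dots,\phi_{K-1}\}$ is orthonormal (e.g.\ after normalization or orthogonalization of $\mu$ against the others), the KL error satisfies
\[
\mathbb{E}\|r_K\|^2_{L^2(\Gamma)}=\sum_{n\ge K}\mathbb{E}|\hat a_n|^2=\sum_{n\ge K}\lambda_n=\varepsilon_K ,
\]
using zero mean, variance $\lambda_n$ and the orthonormality. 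This produces the ``$+1$'' part of the bound.

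Next I would compare coefficients. Since $\phi$ solves $S\phi=-\uinc$ on $\Gamma$, testing against $\phi_j$ for $j<K$ gives
\[
\sum_{n<K}(\phi_j,S\phi_n)\hat a_n+(\phi_j,Sr_K)=(-\uinc,\phi_j).
\]
Subtracting the Galerkin system \eqref{fm2} yields $T_K(\hat{\bm a}-\bm a^{\rm ROM})=-\bm e$, where $e_j=(\phi_j,Sr_K)$. Hence $\|\hat{\bm a}-\bm a^{\rm ROM}\|_2\le\|T_K^{-1}\|_2\|\bm e\|_2$. For $\bm e$ I would use the crude bound $|e_j|\le\|\phi_j\|\,\|S\|\,\|r_K\|=\|S\|\,\|r_K\|$, which gives $\|\bm e\|_2^2\le K\|S\|^2\|r_K\|^2$; this is exactly where the factor $K$ in \eqref{cva} comes from. (Bessel's inequality would even give the better bound $\|S\|^2\|r_K\|^2$, but the crude version matches the stated constant.) By orthonormality again,
\[
\|\phi_K-\phi^{\rm ROM}\|^2=\|\hat{\bm a}-\bm a^{\rm ROM}\|_2^2\le K\|T_K^{-1}\|_2^2\|S\|^2\|r_K\|^2 .
\]

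Finally I would combine the two pieces. The error $\phi^{\rm ROM}-\phi=(\phi^{\rm ROM}-\phi_K)-r_K$ has its first term in $\mathrm{span}\{\phi_0,\dots,\phi_{K-1}\}$ and its second in the orthogonal complement, so Pythagoras gives
\[
\|\phi^{\rm ROM}-\phi\|^2=\|\phi^{\rm ROM}-\phi_K\|^2+\|r_K\|^2\le\bigl(K\|T_K^{-1}\|_2^2\|S\|^2+1\bigr)\|r_K\|^2 .
\]
Taking expectation and using $\mathbb{E}\|r_K\|^2=\varepsilon_K$ gives \eqref{cva}; $T_K$ does not depend on $\alpha$, so the constant passes through the expectation.

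The main obstacle is the orthogonality bookkeeping for $\phi_0=\mu$. The mean is not in general orthogonal to the covariance eigenfunctions, nor of unit norm. I would handle this by replacing $\phi_0$ with the normalized component of $\mu$ orthogonal to the span of $\phi_1,\dots,\phi_{K-1}$, or more simply by noting that $\mu\in\overline{\mathrm{span}}\{\phi_n\}$ can be absorbed into the expansion. I would also have to check that the part of $\mu$ outside the truncated span does not add to the tail; if needed I would define the truncation on the orthogonalized basis so that $r_K$ only carries the centered modes with $n\ge K$. Invertibility of $T_K$ is assumed implicitly by the statement, since $\|T_K^{-1}\|_2$ appears in the bound.
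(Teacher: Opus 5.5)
Your argument is the paper's proof. It uses the same split into the KL tail $\mathbb{E}\|\phi-\phi_K\|^2=\varepsilon_K$ and the coefficient error $T_K^{-1}\bm r_K$, the same crude entrywise bound $|r_K^j|\le\|S\|\,\|(I-P_K)\phi\|$ that produces the factor $K$, and the same Pythagorean combination. The paper simply treats $\{\mu,\phi_1,\phi_2,\dots\}$ as orthonormal without comment, so your explicit flag on $\phi_0=\mu$ is more careful than the source.

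Of your two proposed fixes, only orthogonalization works. Set $V_K=\mathrm{span}\{\mu,\phi_1,\dots,\phi_{K-1}\}$ and $r_K=(I-P_{V_K})\phi$. Best approximation then gives $\|r_K\|\le\|\sum_{n\ge K}\hat a_n\phi_n\|$, hence $\mathbb{E}\|r_K\|^2\le\varepsilon_K$, with $T_K$ understood as the Galerkin matrix in an orthonormal basis of $V_K$. The alternative claim $\mu\in\overline{\mathrm{span}}\{\phi_n\}_{n\ge1}$ is false in general: for the full-aperture disk, $\mu=A_0$ is orthogonal to every eigenfunction $\me^{\mi n\theta}$, $n\neq0$.
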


\begin{proof}
For brevity, we omit the dependence of all functions on $y\in\Gamma$ and $\alpha\sim\mathcal U(a,\,b)$ throughout this proof. Moreover, $\mathbb{E}$ indicates taking the expectation with respect to $\alpha$. The projection of the true density function onto the subspace spanned by $\{\phi_n,\,n=0,\cdots,{K}-1\}$ is denoted as 
\[\phi_{{K}}:=P_{{K}}\phi=\sum_{n=0}^{{K}-1} \hat a_n\phi_n.\]
Using the triangle inequality, we have
     \begin{eqnarray*}
     \mathbb{E} \left[\|\phi^{\rm ROM}-\phi\|_{L^2(\Gamma)}^2\right]
     &=& \mathbb{E} \left[\|\phi-\phi_{{K}}\|_{L^2(\Gamma)}^2+\|\phi_{{K}}-\phi^{\rm ROM}\|_{L^2(\Gamma)}^2\right]\\
     &=& \mathbb{E}\left[\|\phi-\phi_{{K}}\|_{L^2(\Gamma)}^2\right] + \mathbb{E}\left[\|\phi_{{K}}-\phi^{\rm ROM}\|_{L^2(\Gamma)}^2\right]. 
    \end{eqnarray*}
We next analyze the two terms on the right separately. 

The estimate for the first part can be obtained directly from the KL expansion, namely,
\begin{eqnarray}\label{fp}
     \mathbb{E} \left[ \left\| \phi - \phi_{{K}} \right\|_{L^2(\Gamma)}^2 \right] = \mathbb{E} \left[ \int_{\Gamma}\left(\sum_{n={K}}^{\infty} \hat a_n \phi_n\right)^2  ds(y) \right]= \mathbb{E} \left[ \left(\sum_{n={K}}^{\infty} \hat a_n\right)^2 \right] = 
\sum_{n={K}}^{\infty} \lambda_n=:\varepsilon_{{K}},
\end{eqnarray}
since $\{\hat a_n\}^\infty_{n=1}$ are uncorrelated random variables with zero mean and variance $\lambda_n$.

For estimating the second part, we define
$$r_{{K}}^j=\sum_{n=0}^{{K}-1}(\phi_j,\,S\phi_n)\hat a_n-\sum_{n=0}^{\infty}(\phi_j,\,S\phi_n)\hat a_n,\quad j=0,\,1,\,\cdots,\,\infty.$$
Then, 
\[ \mathbb{E} \left[| r_{{K}}^j|^2\right]=\mathbb{E} \left[\left\|(\phi_j,S(I-P_{{K}})\phi)\right\|^2\right]\leq
\mathbb{E} \left[\|\phi_j\|^2_{L^2(\Gamma)}\|S\|^2\|(I-P_{{K}})\phi\|^2_{L^2(\Gamma)}\right]= \|S\|^2\varepsilon_{{K}}. \]
It can be seen from the full-order model that 
\begin{align}\label{apro}
    \sum_{n=0}^{{K}-1}(\phi_j,\,S\phi_n)\hat a_n = -(\uinc,\phi_j)+r^j_{{K}},\quad j=0,\,\cdots,\,{K}-1.
\end{align}
Assemble these ${{K}}$ equations in matrix form
\[T_{{K}}\hat{\bm a}_{{K}} = \bm b+\bm r_{{K}},\]
where $\hat{\bm a}_{{K}}=[a_0,\,a_1,\ldots,\,a_{{K}-1}]^T$, $\bm r_{{K}} = [r^0_{{K}},\,\ldots,\,r^{{K}-1}_{{K}}]^T$ and $\bm b = [-(\uinc,\phi_0),\,\ldots,\,-(\uinc,\phi_{{K}-1})]^T$.
Note that 
\[T_{{K}}\hat{\bm a}^{\rm ROM} = \bm b,\]
where $\hat{\bm a}^{\rm ROM} = [\hat a^{\rm ROM}_0,\,\ldots,\,\hat a^{\rm ROM}_{{K}-1}]$. We then have
\[T_{{K}}(\hat{\bm a}_{{K}}-\hat{\bm a}^{\rm ROM})=\bm r_{{K}}.\]
Consequently,
\begin{eqnarray}\label{sp}
    \mathbb{E} \left[\|\phi_{{K}}-\phi^{\rm ROM}\|_{L^2(\Gamma)}^2\right]&=&\mathbb{E} \left[\|\hat{\bm a}_{{K}}-\hat{\bm a}^{\rm ROM}\|_2^2\right] = \mathbb{E} \left[\|T_{{K}}^{-1}\bm r_{{K}}\|_2^2\right]\\ \nonumber
    &\leq& \|T_{{K}}^{-1}\|^2_2\mathbb{E} \left[\|\bm r_{{K}}\|_2^2\right]\leq {{K}}\|T_{{K}}^{-1}\|^2_2\|S\|^2\varepsilon_{{K}}.
\end{eqnarray}
% with $\|\cdot\|_2$ the spectral norm of the matrix.
We are ready to obtain \eqref{cva} by combining \eqref{fp} and \eqref{sp}. The proof is complete.
\end{proof}
This theorem reveals that the convergence of the CB-ROM in the sense of expectation is directly related to the tail sum of eigenvalues of the covariance function. To understand it intuitively, we compare the numerical errors and theoretical decaying factor $\sqrt{\varepsilon_\mn}=\sqrt{\sum_{n={K}}^N\lambda_n}$ for different shapes and apertures in Figure \ref{f8}, where $N$ denotes the number of points to discretize the boundary integral operator in numerical experiments. In each case, the near field is calculated for a specific incidence plane wave with wavenumber $k=2\pi$ and direction angle $\alpha$ is randomly sampled with $\alpha\sim\mathcal U(\pi,\frac{3\pi}{2})$. The numerical results show that the observed errors decay at almost the same rate as the tail sum of eigenvalues $\sqrt{\varepsilon_\mn}$, which is strongly consistent with the above convergence analysis.
\begin{figure}[htbp]
    \centering
    \subfloat[]{\includegraphics[width=0.25\linewidth]{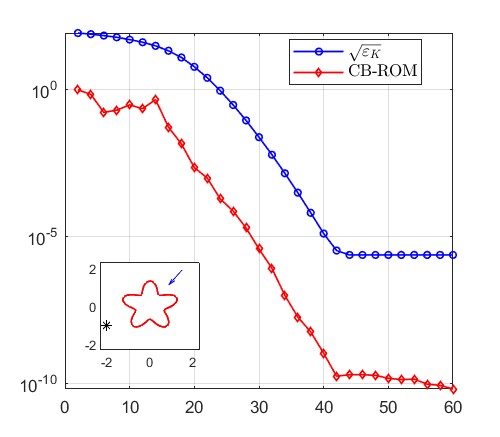}}
    \subfloat[]{\includegraphics[width=0.25\linewidth]{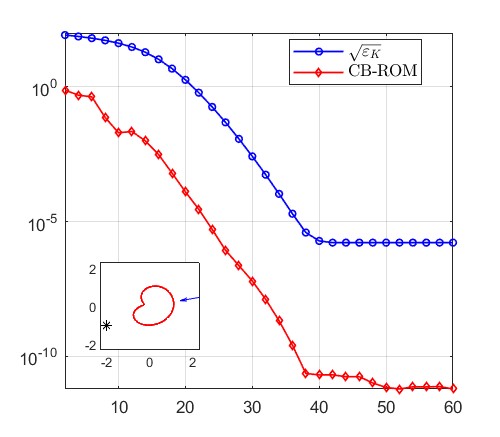}}
    \subfloat[]{\includegraphics[width=0.25\linewidth]{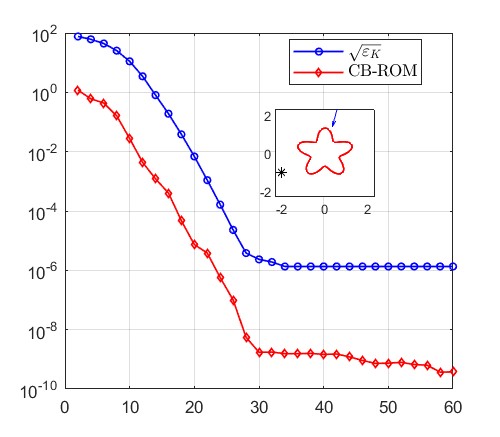}}
    \subfloat[]{\includegraphics[width=0.25\linewidth]{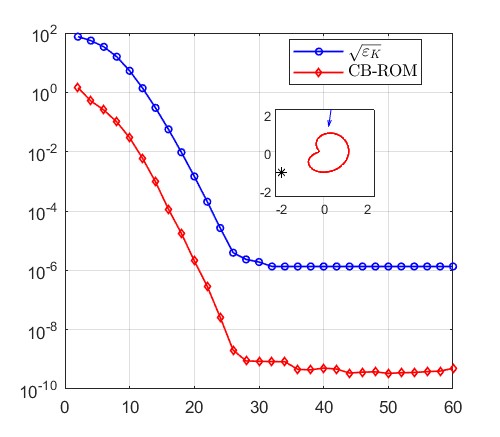}}
    \caption{The comparison of the near-field error at $(-2,-1)$, indicated by a black star, by the CB-ROM against the decaying factor {$\sqrt{\varepsilon_\mn}$}. (a) and (b) show the results for the star-shaped and apple-shaped scatterer, respectively, under the full-aperture incidence. (c) and (d) are the corresponding results under the limited aperture $(\pi,2\pi)$.}
    \label{f8}
\end{figure}

\section{Approaches to construct the covariance matrix}\label{sec:podgnn}
\setcounter{equation}{0}
The successful implementation of the CB-ROM relies on obtaining the covariance function for a specified scatterer geometry and incident wave distribution. To this end, this section is devoted to numerical approaches for constructing the covariance function associated with an arbitrary scatterer. We first introduce the data-driven POD, a widely used technique for reduced basis computation. Then we provide a simple physics-informed covariance function for multiple scatterers to capture the low-rank property through controlling the correlation length. To enable fast and accurate covariance construction for general scatterers, we further develop a learning-based GNN method that captures the correlation of the contrast source directly from geometric information. Finally, we give the numerical stability analysis of the CB-ROM in the presence of a noisy covariance function.

Before presenting the details, we highlight that the covariance function \eqref{eq:covariance} for a given scatterer $D$ is numerically approximated by the covariance matrix, denoted as $\Sigma(D)$, after discretizing the boundary $\Gamma$ with $N$ nodes $\{y_i\}^N_{i=1}$. In this section, the contrast source in \eqref{eq:boundary} is understood in its discretized form, namely, $\hat \phi(\alpha):=[\phi(y_1;\,\alpha),\,\ldots,\,\phi(y_N;\,\alpha)]^T$. 
\subsection{Data-driven random POD}\label{POD}
POD typically computes the covariance function using the method of snapshots. Specifically, we first sample multiple incident angles of the plane wave uniformly at random in the given aperture $(a,\,b)$, denoted as $\{\alpha_i,\,i=1,\,\ldots,\,M\}$. Using these plane waves as incident fields $\uinc$, we collect high-fidelity snapshots $\{\hat \phi(\alpha_i),\,i=1,\,2,\,\cdots,\,M\}$ by sovling the FOM \eqref{fm}. The sample covariance matrix is then defined by
\begin{align}\label{eq:cov-num}
    \Sigma_M(D)=\frac{1}{M-1}\sum_{i=1}^M(\hat \phi(\alpha_i)-\mu)(\hat \phi(\alpha_i)-\mu)^H\in\mathbb{C}^{N\times N},
\end{align}
where $\mu = \frac{1}{M}\sum_{i=1}^M \hat \phi(\alpha_i)$ is the sample mean vector, and $(\cdot)^H$ indicates the Hermitian transpose. By the central limit theorem, $\Sigma_M(D)$ concentrates around the true covariance matrix as $M$ grows.

Once the sample covariance matrix $\Sigma_M(D)$ is constructed, we perform the SVD to get
\[\Sigma_{M}(D)=\mathbb{U}\mathbb{S}\mathbb{V}^H,\]
where $\mathbb{U}=[\zeta_1;\,\zeta_2;\,\cdots;\,\zeta_{N}]\in\mathbb{C}^{N\times N}$ and $\mathbb{V}=[\psi_1;\,\psi_2;\,\cdots;\,\psi_M]\in\mathbb{C}^{M\times M}$ are orthogonal matrices. Numerically, the first $({K}-1)$ columns of $\mathbb{U}$, together with the mean vector $\mu$, form ${K}$ reduced bases of the CB-ROM.

\subsection{Physics-informed intuitive formula}\label{subsec:pi-formula}
In this subsection, we propose a physics-informed intuitive formula of the covariance function for a single scatterer,
\begin{equation}\label{pf}
\mathcal C(x,y) = \operatorname{exp}\left(-\frac{\|x-y\|_2^2}{\beta^2\lambda^2}\right),
\end{equation}
where $\lambda$ is the wavelength, $\beta>0$ is the scaling factor controlling the correlation length. For the case of multiple scatterers, namely, $D = \cup^{N_D}_{i=1} D_i$, we set
\begin{equation}
\mathcal C(x,y) = 
\begin{cases}
    \operatorname{exp}\left(-\frac{\|x-y\|^2}{\beta^2\lambda^2}\right),&\quad x,\,y\in \partial D_i,\,i=1,\,\cdots, N_D,\\
    \operatorname{exp}\left(-\frac{\|O_i-O_j\|^2}{\beta^2\lambda^2}\right), &\quad x\in \partial D_i, \,y\in \partial D_j,\,i\neq j,
\end{cases}
\end{equation}
where $O_i$ defines the center of the scatterer $D_i$. 

This analytical approximation is motivated by the physical observation that the correlation between two points of the contrast source on the boundary decays rapidly as their separation distance becomes large relative to the wavelength. As demonstrated by the numerical examples in Section \ref{subsec:PI-formula-num}, this formulation effectively captures the low-rank structure of the covariance function, yielding accurate results for both single and multiple scatterers with relatively smooth boundaries. Furthermore, for multiscale scatterers, it remains capable of globally representing the dominant low-frequency modes, thereby achieving a favorable balance between satisfactory accuracy and computational efficiency.

\subsection{The learning-based method--GNN}\label{sec:gnn}
Although the data-driven POD can yield an accurate covariance matrix for a given obstacle, this method requires repeatedly solving a linear system with a fixed size, making it computationally prohibitive. Moreover, the method is inherently case-dependent, necessitating a full recomputation for each new scatterer. In practice, the total time required to construct the covariance and the resulting ROM may even exceed the direct BIE, thereby undermining the practical value of the ROM. The intuitive physics-informed formula, while effective under certain conditions, lacks the fidelity and accuracy required for broader applications. These limitations motivate the development of an efficient and high-fidelity reconstruction strategy for the covariance matrix. 

We begin by highlighting two observations. First, the covariance matrix essentially captures the correlations among the contrast source evaluated at different discrete points, hereafter referred to as point sources for notational convenience. Intuitively, the covariance matrix indicates which point sources are likely to be simultaneously or probabilistically activated under illumination by one plane wave. Such correlations depend solely on the scatterer itself. This suggests that the covariance between any two point sources on the boundary can be inferred from their local geometric features, such as their locations and slopes. Accordingly, it is feasible to directly reconstruct the covariance matrix for a given scatterer from its geometric description. Second, the number of point sources used to discretize the boundaries is not fixed, but rather varies according to the specific geometry of each scatterer. In recent years, deep learning techniques have emerged as a powerful paradigm for tackling various mathematical problems with practical applications; see, for instance, \cite{Li-2024-40, Zou-2025-47, Sun-2025-4, Raissi-2019-378} and the references therein. A natural idea for our task is to employ deep learning to learn the covariance matrix associated with each scatterer. However, conventional architectures such as multilayer perceptrons (MLPs) and convolutional neural networks are primarily designed to handle Euclidean input data, specifically, fixed-size vectors or images whose pixels are aligned on a Cartesian grid. Consequently, these standard models cannot be directly applied to our problem, which inherently involves a variable number of boundary points. It is noteworthy that the set of point sources naturally forms graph-structured data, where the point sources and their correlations can be interpreted as vertices and edges of a graph. Graph neural networks (GNNs) \cite{caoPhysicsDataHybridDrivenEdgeFeatured2025, Wu-2020-32, Scarselli-2008-20}, which have demonstrated remarkable success in handling the complexity of graph data, are thus well-suited for learning the interaction of these point sources, enabling effective approximation of the covariance matrix.
\begin{figure}[htbp]
    \centering
    \includegraphics[width=0.35\linewidth]{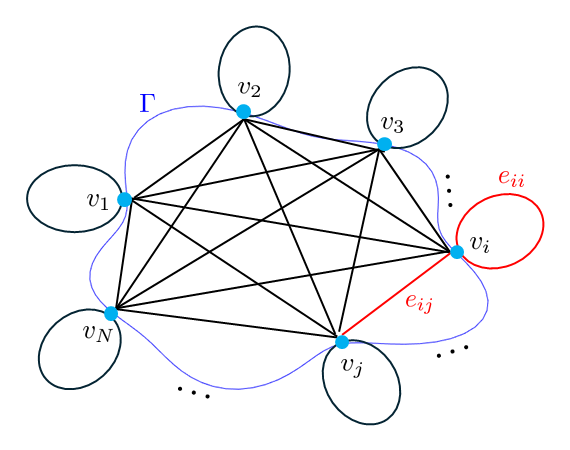}
    \caption{Discretize $\Gamma$ with $N$ nodes, and then generate a fully connected undirected graph with self-loops. }
    \label{fig:illustration-graph}
\end{figure}

We first recall basic concepts of graphs. A graph is typically represented as $\mathcal G: = (\mathcal E, \mathcal V)$, where $\mathcal V$ is the set of nodes $\{v_1,\,v_2,\,\ldots,v_{N}\}$, and $\mathcal E: =\{e_{ij}= (v_i,\,v_j)\in \mathcal V\times \mathcal V\}$ represents the edge set. We denote by ${K}(v_i):=\{v_j: e_{ij} \in \mathcal E\}$ the one-hop neighbors. Each node $v_i \in \mathcal V$ has a node feature vector $\bm x^v_i\in \mathbb R^{N_v}$, which describes the explicit contribution of $v_i$. In the graph-structured framework $\mathcal G$, each node $v_i\in \mathcal V$ is characterized by not only its explicit feature vector $\bm x^v_i$, but also a latent feature (or hidden feature) vector $\bm h^v_i\in \mathbb R^{N_h}$, which is obtained from its neighboring nodes ${K}(v_i)$. Analogously, we can assign for each edge $e_{ij}\in \mathcal E$ an edge feature vector $\bm x^e_{(i,j)}\in\mathbb R^{N_e}$. The fundamental principle of GNNs is to iteratively update these latent features by aggregating information across the graph structure, thereby capturing complex dependencies among vertices. These learned latent features can subsequently serve as the foundation for downstream tasks, such as node classification or the construction of a knowledge graph \cite{Wu-2020-32, Schlichtkrull-2018}.

Our goal is to train a GNN to construct the covariance matrix for an arbitrary given scatterer. It is reformulated as an edge-level GNN problem, in which node and edge features are updated alternately, and the final edge representations are used to construct the output covariance matrix. Assume that we discretize the boundary $\Gamma$ with $N$ nodes in BIE formulation, as depicted in Figure \ref{fig:illustration-graph}. The corresponding node set is denoted as $\mathcal E=\{v_1,\,v_2,\,\ldots,\,v_N\}$. Since the covariance matrix encodes pairwise relationships among all point sources, we construct a fully connected undirected graph with self-loops, namely, every pair of two vertices is connected by one edge. 

In GNNs, the information aggregation strategy plays an important role in the whole story, and considerable research has been devoted to designing various effective aggregation schemes; see the reviews \cite{Wu-2020-32, zhou-2020-1} for a comprehensive overview. In our numerical implementations, we first encode the initial node  and edge features into a latent space by MLPs, namely, for $i,\,j = 1,\,2,\,\cdots,N$,
\begin{equation}\label{eq:mlp-encoder}
    \bm h^v_i(0) = \sigma\left( W^v_2\cdot\sigma(W^v_1 v_i + b^v_1) + b^v_2\right), \,\bm h^e_{ij}(0) = \sigma\left( W^e_2\cdot\sigma(W^e_1 \bm x^e_{(i,j)} + b^e_1) + b^e_2\right),
\end{equation}
where $\sigma(\cdot)$ denotes a nonlinear activation function. The learnable parameters are
\[
W^v_1\in \mathbb{R}^{N_h\times N_v},\, W^e_1\in \mathbb{R}^{N_h\times N_e},\,W^v_2, W^v_2 \in \mathbb{R}^{N_h\times N_h},\, b^v_1,\, b^v_2,\, b^e_1,\, b^e_2\in \mathbb{R}^{N_h}.
\]
Subsequently, the information aggregation is carried out iteratively. At each iteration, the node features are updated by aggregating information from neighboring nodes and the corresponding edge features, namely,
\begin{equation}\label{eq:aggragationnode}
    \bm h^v_i(d) = \sum_{j\in{K}(v_i)}f\left(\bm h^v_i(d-1),\,\bm h^v_j(d-1),\, \bm h^e_{ij}(d-1)\right),\quad d=1,\,2,\,\cdots,\,D,
\end{equation}
where $f(\cdot)$ is an MLP with learnable parameters. The edge features are then updated  based on the newly computed node representations:
\begin{equation}\label{eq:aggragationedge}
    \bm h^e_{ij}(d) = g\left(\bm h^v_i(d),\,\bm h^v_j(d),\, \bm h^e_{ij}(d-1)\right),\quad d=1,\,2,\,\cdots,\,D,
\end{equation}
with $g(\cdot)$ also an MLP. After $D$ iterations, the final edge features $\bm h^e_{ij}(D)$ are decoded into a two-dimensional vector $\bm s_{ij}\in \mathbb{R}^2$ via an MLP decoder:
\begin{equation}\label{eq:mlp-decoder}
    \bm s_{ij} = W^s_2\cdot\sigma(W^s_1 \bm h^e_{ij}(D) + b^s_1) + b^s_2, \quad i,\,j = 1,\,2,\,\cdots,N,
\end{equation}
with learnable parameters $W^s_1\in \mathbb{R}^{N_h\times N_h},\,W^s_2\in\mathbb{R}^{2\times N_h},\,b^s_1\in\mathbb{R}^{N_h},\, b^s_2\in \mathbb{R}^{2}$. The two components of $\bm s_{ij}$ serve as the real-part and imaginary-part variances of two point sources at $v_i$ and $v_j$. Denote by $\mathcal F_\Theta$ the proposed GNN architecture, where $\Theta$ indicates the collection of all learnable parameters in \eqref{eq:mlp-encoder}--\eqref{eq:mlp-decoder}. For a given scatterer $D$, the output of the network is a matrix $\mathcal F_\Theta(D)\in \mathbb{R}^{N\times N}$, whose $(i,j)$-th entry is $\bm s_{ij}$. Consequently, the objective of the proposed GNN is to learn an optimal $\Theta$ such that $\mathcal F_\Theta(D)$ accurately constructs the true covariance matrix associated with the scatterer $D$.

\begin{comment}
\begin{eqnarray}\label{equ:integratialrep}
	\usc(x) =\int_{\Gamma} \left(\frac{\partial G(x,y)}{\partial \nu(y)}+\mi\eta G(x,y)\right)\phi(y) ds_y, \quad x\in 	\mr\setminus\overline{D}
\end{eqnarray}
with a prescribed real coupling parameter $\eta$ and an unknown density function $\phi$.
\end{comment}

\begin{remark}
The aggregation mechanism of GNNs is similar to the convolution operation of CNNs, which aims to integrate information from neighboring nodes. However, CNNs are designed to operate on structured, grid-like data with a fixed ordering, while GNNs can handle unstructured, order-independent data. Another notable advantage of GNNs lies in their ability to accommodate inputs of varying sizes. In the proposed GNN architecture, for example, all learnable parameters depend solely on the dimensions of node and edge feature vectors, which remain constant regardless of the number of discrete points on the boundary. As a result, the trained model can be directly applied to different discretizations, which suggests the flexibility and extensibility of our strategy.
\end{remark}

\subsection{Numerical stability analysis}\label{stable}
In practice, the exact covariance matrix is rarely available, making it essential to examine the numerical stability of the CB-ROM in the presence of the noise. In this subsection, we give a stability analysis of the CB-ROM when the numerical covariance matrix is subject to perturbation. Specifically, let $ C\in \mathbb{C}^{N\times N}$ be the true covariance matrix and $ {C}_\delta\in \mathbb{C}^{N\times N}$ be its corresponding perturbation. Define $\tilde C = C+C_\delta$ and perform the SVD to have
\begin{align}\label{pert}
C = \mathbb{U}\mathbb{S}\mathbb{V}^H,\quad \tilde{C}=\tilde{\mathbb{U}}\tilde{\mathbb{S}}\tilde{\mathbb{V}}^H,
\end{align}
where $\mathbb{U}=[\zeta_1;\,\cdots;\,\zeta_N],\,\tilde{\mathbb{U}}= [\psi_1;\,\cdots;\,\psi_N]$. In numerical implementation, the contrast source is assumed to lie in the subspace $\tilde{\mathcal{S}}_{K}=\operatorname{span}\{\psi_1,\,\dots,\,\psi_{K}\}$, rather than $\mathcal{S}_{K}=\operatorname{span}\{\zeta_1,\,\dots,\,\zeta_{K}\}$ with ${K}<N$. To facilitate the stability analysis, we need to make the following reasonable assumptions.
\begin{itemize}
    \item $C_\delta$ is a symmetric matrix. In practice, the constructed covariance matrix is always restricted to being a symmetric matrix.
    
    \item The singular values of $C$ are listed as $\lambda_1>\cdots>\lambda_{{K}}>\lambda_{{K}+1}>\cdots>\lambda_{N}\geq 0$, and define $\delta_{{K}}=\lambda_{{K}}-\lambda_{{K}+1}$. 
\end{itemize}
In the regime of random illumination, the true discrete contrast source $\hat \phi\in \mathbb{C}^N$ is denoted as 
\[\hat \phi = \sum_{n=1}^N a_n\sqrt{\lambda_n}\zeta_n,\]
where $a_n,\,n=1,\,\dots,\,N$ are random variables with zero mean and variance $1$. Moreover, we define the truncated form $\hat \phi_{K} = \sum_{n=1}^{K} a_n\sqrt{\lambda_n}\zeta_n$. In the meantime, the projection of $\hat \phi$ onto $\tilde{\mathcal{S}}_{K}$ is denoted as $\tilde \phi_{K} = \sum_{n=1}^{K} b_n \psi_n$. We aim to estimate $\|\tilde \phi_{K}-\hat \phi\|_2$ in the sense of expectation. To this end, we need to introduce the Davis-Kahan theorem \cite{Davis-Kahan}.   
\begin{lemma}\label{DK}
     Assume that $\tilde C$ is a symmetric matrix. We have
\[\|P_{\mathcal{S}_{K}}-P_{\tilde{\mathcal{S}}_{K}}\|_2\leq \frac{2\|C_\delta \|_2}{\delta_{K}},\]
where $\|\cdot\|_2$ denotes the spectral norm for matrices.
\end{lemma}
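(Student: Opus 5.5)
The plan is to prove this Davis--Kahan-type bound with the $\sin\Theta$ argument: the difference of two orthogonal projectors is controlled by the off-diagonal block $P_{\mathcal S_K}^{\perp}P_{\tilde{\mathcal S}_K}$, and that block is in turn controlled by a Sylvester equation whose separation is governed by the gap $\delta_K$. Since $C$ is a covariance matrix, it is Hermitian positive semidefinite. Under the standing assumptions, $\tilde C = C + C_\delta$ is Hermitian as well. Hence the SVDs in \eqref{pert} are eigendecompositions, and $\mathcal S_K$, $\tilde{\mathcal S}_K$ are the spans of the top-$K$ eigenvectors. Throughout, write $P=P_{\mathcal S_K}$, $\tilde P = P_{\tilde{\mathcal S}_K}$, $P^\perp = I-P$, and let $\tilde\lambda_1\ge\cdots\ge\tilde\lambda_N$ denote the eigenvalues of $\tilde C$.

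First I will dispose of a trivial case. If $\|C_\delta\|_2 \ge \delta_K/2$, the right-hand side is at least $1$. The difference of two orthogonal projectors always has spectral norm at most $1$, so there is nothing to prove. From now on I assume $\|C_\delta\|_2 < \delta_K/2$.

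Next I will use the standard fact that, for two orthogonal projectors of the same rank $K$, $\|P-\tilde P\|_2 = \|P^\perp \tilde P\|_2$, which equals the sine of the largest principal angle. This follows from the CS decomposition, or from the identity $(P-\tilde P)^2 = P^\perp\tilde P + P\tilde P^\perp$ restricted to invariant blocks. So it suffices to bound $X := P^\perp \tilde P$.

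For that bound I will derive a Sylvester equation. Since $C$ commutes with $P^\perp$ and $\tilde C \tilde P = \tilde P \tilde C$, I get $P^\perp C_\delta \tilde P = P^\perp(\tilde C - C)\tilde P = X\tilde C - C X$. Restrict $X$ to a map from $\tilde{\mathcal S}_K$ to $\mathcal S_K^\perp$. On $\tilde{\mathcal S}_K$, the operator $\tilde C$ has spectrum in $[\tilde\lambda_K,\infty)$; on $\mathcal S_K^\perp$, the operator $C$ has spectrum in $(-\infty,\lambda_{K+1}]$. Weyl's inequality gives $\tilde\lambda_K \ge \lambda_K - \|C_\delta\|_2$. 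The two spectra are therefore separated by $\lambda_K - \|C_\delta\|_2 - \lambda_{K+1} = \delta_K - \|C_\delta\|_2 > \delta_K/2$.

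The Bhatia--Davis--McIntosh bound for Sylvester equations with Hermitian coefficients separated by an interval gap $g$ gives $\|X\|_2 \le \|P^\perp C_\delta \tilde P\|_2 / g$. One can verify this directly by taking a unit vector $u\in\tilde{\mathcal S}_K$ and a unit vector $w\in\mathcal S_K^\perp$ that achieve $\|X\|_2$ as singular vectors, and comparing $\langle w, X\tilde C u\rangle$ with $\langle w, CXu\rangle$. This yields
\[\|P-\tilde P\|_2 = \|X\|_2 \le \frac{\|C_\delta\|_2}{\delta_K - \|C_\delta\|_2} < \frac{2\|C_\delta\|_2}{\delta_K}.\]

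The main obstacle I expect is the Sylvester-equation step in the spectral norm, in particular showing that the gap-separation bound holds with constant $1$. If the direct singular-vector argument becomes delicate, my fallback is the Yu--Wang--Samworth variant of Davis--Kahan. It gives $\sin\Theta \le 2\|C_\delta\|_2/\delta_K$ using only the population gap $\delta_K$, with no case split, and its constant $2$ matches the statement exactly. Either route yields Lemma \ref{DK}.
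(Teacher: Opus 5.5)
Your argument is correct, but the paper contains no proof to match it against: Lemma~\ref{DK} is simply quoted from the Davis--Kahan literature. The stated form, with constant $2$ and the unperturbed gap $\delta_K$, is the Yu--Wang--Samworth variant, i.e.\ your fallback.

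Your main route instead proves the bound from scratch. The case $\|C_\delta\|_2\ge\delta_K/2$ is trivial because $\|P-\tilde P\|_2\le 1$. Otherwise, Weyl's inequality turns the population gap into the mixed gap $\tilde\lambda_K-\lambda_{K+1}\ge\delta_K-\|C_\delta\|_2$. The singular-vector argument on $X\tilde C-CX=P^\perp C_\delta\tilde P$ then works exactly as you sketch, so the Sylvester step needs no external theorem. Concretely, with $Xu=sw$, $X^Hw=su$ and $s=\|X\|_2>0$, one has $u\in\tilde{\mathcal S}_K$, $w\in\mathcal S_K^\perp$, and $w^HP^\perp C_\delta\tilde Pu=s(u^H\tilde Cu-w^HCw)\ge s(\tilde\lambda_K-\lambda_{K+1})$. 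This gives a self-contained proof and the sharper bound $\|C_\delta\|_2/(\delta_K-\|C_\delta\|_2)$ when $\|C_\delta\|_2<\delta_K/2$; the citation buys only brevity.

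Two details to tidy:
\begin{itemize}
\item The identity should read $(P-\tilde P)^2=P\tilde P^\perp P+P^\perp\tilde PP^\perp$; your version differs from $(P-\tilde P)^2$ by $P\tilde P-\tilde PP$. The correct identity gives $\|P-\tilde P\|_2=\max(\|\tilde P^\perp P\|_2,\|P^\perp\tilde P\|_2)$, and hence your claim for equal ranks.
\item Since $\tilde C$ may be indefinite, its SVD orders eigenvectors by $|\tilde\lambda_i|$, so you should not simply call it an eigendecomposition. In the nontrivial case, $\tilde\lambda_K>(\lambda_K+\lambda_{K+1})/2\ge\delta_K/2>\|C_\delta\|_2\ge|\tilde\lambda_i|$ for every negative $\tilde\lambda_i$, and $\tilde\lambda_{K+1}<(\lambda_K+\lambda_{K+1})/2$. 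Hence $\tilde{\mathcal S}_K$ is well defined and coincides with the top-$K$ eigenspace your argument uses.
\end{itemize}
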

This estimate demonstrates that the projection error is controlled by the perturbation magnitude relative to the spectral gap. 
We are now at the point to state the core theorem.
\begin{theorem}
Under the above assumptions, we have
\begin{equation*}
    \mathbb{E}[\|\hat \phi - \tilde \phi_{K}\|_2] \leq \sqrt{\varepsilon_{K}} + \frac{2\|C_\delta \|_2}{\delta_{K}}\sqrt{\varepsilon_0},
\end{equation*}
where $\varepsilon_i = \sum_{n=i+1}^N \lambda_n$.
\end{theorem}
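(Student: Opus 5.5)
The plan is to split the error by inserting the truncated KL expansion $\hat\phi_K$ and to treat the projection $\tilde\phi_K$ as $P_{\tilde{\mathcal S}_K}\hat\phi$. Since $\hat\phi_K = P_{\mathcal S_K}\hat\phi$ (the $\zeta_n$ are orthonormal), we write
\[
\hat\phi - \tilde\phi_K = (\hat\phi - P_{\mathcal S_K}\hat\phi) + (P_{\mathcal S_K}-P_{\tilde{\mathcal S}_K})\hat\phi .
\]
The triangle inequality and linearity of expectation then give
\[
\mathbb E\|\hat\phi-\tilde\phi_K\|_2 \le \mathbb E\|\hat\phi-\hat\phi_K\|_2 + \|P_{\mathcal S_K}-P_{\tilde{\mathcal S}_K}\|_2\,\mathbb E\|\hat\phi\|_2 .
\]
The projector difference is a deterministic matrix, so it comes out of the expectation as its spectral norm.

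For the first term, use Jensen's inequality, $\mathbb E\|X\|\le (\mathbb E\|X\|^2)^{1/2}$. Orthonormality of $\{\zeta_n\}$ gives $\|\hat\phi-\hat\phi_K\|_2^2=\sum_{n>K}|a_n|^2\lambda_n$. Since $\mathbb E|a_n|^2=1$, this has expectation $\sum_{n=K+1}^N\lambda_n=\varepsilon_K$, so the first term is at most $\sqrt{\varepsilon_K}$. Only the unit variances are needed here, not independence, because the cross terms vanish through orthogonality.

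For the second term, Lemma \ref{DK} bounds $\|P_{\mathcal S_K}-P_{\tilde{\mathcal S}_K}\|_2$ by $2\|C_\delta\|_2/\delta_K$. The assumptions make $\tilde C$ symmetric (Hermitian) with a simple gap at index $K$, so the lemma applies. The same Jensen argument gives $\mathbb E\|\hat\phi\|_2\le(\sum_{n=1}^N\lambda_n)^{1/2}=\sqrt{\varepsilon_0}$. Combining the two estimates yields the stated bound.

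The main point requiring care is interpreting $\tilde\phi_K$ as the orthogonal projection $P_{\tilde{\mathcal S}_K}\hat\phi$, i.e.\ $b_n=\psi_n^H\hat\phi$, with $\tilde{\mathbb U}$ unitary. This is what lets the error decompose exactly into the two pieces above, and I would state it explicitly at the start. A minor point is that the statement writes $\hat\phi$ without the mean term; I would follow it as written and treat $\hat\phi$ as the centered field.
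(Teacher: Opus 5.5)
Your proposal is correct and takes essentially the same route as the paper. Both arguments insert $\hat\phi_K$ and apply the triangle inequality, bound the truncation term by $\sqrt{\varepsilon_K}$ via Jensen, and bound the projection term via Lemma~\ref{DK} together with $\mathbb{E}\|\hat\phi\|_2\le\sqrt{\varepsilon_0}$. Your extra remarks, that $\tilde\phi_K=P_{\tilde{\mathcal S}_K}\hat\phi$ and that orthonormality alone eliminates the cross terms, make explicit what the paper leaves implicit.
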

\begin{proof}
Note that 
\[\mathbb{E}[\|\hat \phi - \tilde \phi_{K}\|_2]\leq \mathbb{E}[\|\hat \phi - \hat \phi_{K}\|_2] + \mathbb{E}[\|\hat \phi_{K} - \tilde \phi_{K}\|_2].\]
We next estimate the two terms on the right side separately. On the one hand, we derive from Jensen's inequality that
\[
(\mathbb{E}[\|\hat \phi - \hat \phi_{K}\|_2])^2 \leq \mathbb{E}\left[\|\sum_{n={K}+1}^ N a_n\sqrt{\lambda_n}\zeta_n\|^2_2\right] = \sum_{n={K}+1}^N \lambda_n = \varepsilon_{K},
\] 
Hence, 
\begin{equation}\label{fpt}
    \mathbb{E}[\|\hat \phi - \hat \phi_{K}\|_2]\leq \sqrt{\varepsilon_{K}}.
\end{equation}

On the other hand, by Lemma \ref{DK}, we have
\begin{equation}\label{spt}
\begin{aligned}
    \mathbb{E}\left[\|\hat \phi_{K} - \tilde \phi_{K}\|_2\right] &= \mathbb{E}\left[\|P_{\mathcal{S}_{K}} \hat \phi - P_{\tilde{\mathcal{S}}_{K}} \hat \phi\|_2\right]\leq \frac{2\|C_\delta \|_2}{\delta_{K}}\mathbb{E}\left[\|\hat\phi\|_2\right]\leq  \frac{2\|C_\delta \|_2}{\delta_{K}}\sqrt{\varepsilon_0}.
\end{aligned}
\end{equation}
Combining \eqref{fpt} and \eqref{spt} yields 
\begin{align}
\begin{aligned}
    \mathbb{E}[\|\hat \phi - \tilde \phi_{K}\|_2]& \leq \sqrt{\varepsilon_{K}} + \frac{2\|C_\delta \|_2}{\delta_{K}}\sqrt{\varepsilon_0}.
\end{aligned}
\end{align}
The proof is then completed.
\end{proof}
This theorem indicates that the projection of the exact contrast source onto the subspace spanned by perturbed bases still retains a favorable convergence property, provided that the perturbation matrix is sufficiently small relative to the spectral gap. 

Furthermore, the numerical experiments in Section \ref{noise} show that the near-field error maintains a decay rate comparable to that of the noise-free case until the eigenvalue approaches the noise level. Under suitable assumptions on the smoothness of the covariance function, this behavior can be explained by the following theorem.
\begin{theorem}
Suppose that there exists some ${T}$ and $L>1$ such that $\frac{\lambda_i}{\lambda_{i+1}}>L$ for all $i>{T}$, i.e. the eigenvalues enter an exponential decay regime beyond index $T$, and $\|C_\delta\|_2 < \lambda_{K}$, then for $\mn>{T}$, 
\[
\mathbb{E}[\|\hat \phi_{K} - \tilde \phi_{K}\|_2]\leq\sqrt{\varepsilon_{K}} + C_1 \lambda_\mn +C_2 \mn \sqrt{\lambda_\mn},
\]
where $C_1=\frac{2\varepsilon_0}{\lambda_{T}-\lambda_{{T}+1}}, C_2=\frac{2L}{L-1}$ are $\mn$-independent constants.
\end{theorem}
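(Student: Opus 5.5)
The plan is to reduce everything to Lemma \ref{DK}, applied at several different truncation indices rather than only at $K$. Write $P_{\mathcal S_K}$ and $P_{\tilde{\mathcal S}_K}$ for the orthogonal projections, so that $\hat\phi_K=P_{\mathcal S_K}\hat\phi$ and $\tilde\phi_K=P_{\tilde{\mathcal S}_K}\hat\phi$. Splitting $\hat\phi=\hat\phi_K+(\hat\phi-\hat\phi_K)$ gives
\[
\hat\phi_K-\tilde\phi_K=(I-P_{\tilde{\mathcal S}_K})\hat\phi_K-P_{\tilde{\mathcal S}_K}(\hat\phi-\hat\phi_K).
\]
Since $P_{\tilde{\mathcal S}_K}$ is a contraction, Jensen's inequality as in \eqref{fpt} bounds the expectation of the second term by $\sqrt{\varepsilon_K}$. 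It remains to bound $\mathbb E\|(I-P_{\tilde{\mathcal S}_K})\hat\phi_K\|_2$. We expand it as
\[
\sum_{n=1}^{K}a_n\sqrt{\lambda_n}\,(I-P_{\tilde{\mathcal S}_K})\zeta_n,
\]
and split the sum into a head $n\le T$ and a tail $T<n\le K$.

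\emph{Head $n\le T$.} Here $\zeta_n\in\mathcal S_T$, and $\tilde{\mathcal S}_T\subset\tilde{\mathcal S}_K$ because $T<K$. Hence
\[
\|(I-P_{\tilde{\mathcal S}_K})\hat\phi_T\|_2\le\|(I-P_{\tilde{\mathcal S}_T})P_{\mathcal S_T}\hat\phi\|_2\le\|P_{\mathcal S_T}-P_{\tilde{\mathcal S}_T}\|_2\,\|\hat\phi\|_2.
\]
Lemma \ref{DK} at index $T$ bounds the projection difference by $2\|C_\delta\|_2/(\lambda_T-\lambda_{T+1})$, and the hypothesis $\|C_\delta\|_2<\lambda_K$ replaces $\|C_\delta\|_2$ by $\lambda_K$. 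Taking expectations, $\mathbb E\|\hat\phi\|_2\le\sqrt{\varepsilon_0}$. This yields a term of the form $C_1\lambda_K$. The constant comes out as $2\sqrt{\varepsilon_0}/(\lambda_T-\lambda_{T+1})$; the stated $2\varepsilon_0/(\lambda_T-\lambda_{T+1})$ is what results if one bounds the random factor more crudely, or assumes $\varepsilon_0\ge1$. I would record which version is actually obtained.

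\emph{Tail $T<n\le K$.} Treat each mode separately, again using nestedness: $\|(I-P_{\tilde{\mathcal S}_K})\zeta_n\|_2\le\|(I-P_{\tilde{\mathcal S}_n})\zeta_n\|_2$. Applying Lemma \ref{DK} at index $n$ bounds this by $2\|C_\delta\|_2/\delta_n$. The exponential-decay assumption gives
\[
\delta_n=\lambda_n-\lambda_{n+1}\ge\lambda_n\Bigl(1-\frac1L\Bigr).
\]
Therefore the $n$-th contribution is at most
\[
\mathbb E|a_n|\,\sqrt{\lambda_n}\,\frac{2L\lambda_K}{(L-1)\lambda_n}\le C_2\frac{\lambda_K}{\sqrt{\lambda_n}}\le C_2\sqrt{\lambda_K},
\]
using $\mathbb E|a_n|\le(\mathbb E|a_n|^2)^{1/2}=1$ and $\lambda_n\ge\lambda_K$. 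Summing over at most $K$ indices gives $C_2K\sqrt{\lambda_K}$. Combining the three pieces with the triangle inequality gives the claimed estimate.

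The main obstacle is the tail. A single application of Lemma \ref{DK} at index $K$ costs a factor $1/\delta_K\sim1/\lambda_K$, which destroys the decay. The remedy is to use a separate spectral gap for each mode, which is exactly what the exponential-decay hypothesis makes affordable, together with the nestedness $\tilde{\mathcal S}_n\subset\tilde{\mathcal S}_K$. I also note that the left-hand side as written is $\hat\phi_K-\tilde\phi_K$, whereas the $\sqrt{\varepsilon_K}$ term naturally belongs to $\hat\phi-\tilde\phi_K$. The argument above covers the statement as written, and adding \eqref{fpt} covers the full error $\hat\phi-\tilde\phi_K$ at the cost of an extra $\sqrt{\varepsilon_K}$.
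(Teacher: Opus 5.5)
Your proposal is correct and follows the paper's proof essentially step for step. It uses the same split of $\hat\phi_K-\tilde\phi_K$ into $P_{\tilde{\mathcal S}_K}(\hat\phi-\hat\phi_K)$ and $(P_{\mathcal S_K}-P_{\tilde{\mathcal S}_K})\hat\phi_K$, and the same head/tail split at $T$. Lemma \ref{DK} is applied at index $T$ for the head and at each index $n\in(T,K]$ for the tail, using $\delta_n\ge(1-1/L)\lambda_n$ and $\|C_\delta\|_2<\lambda_K$. Your explicit use of the nestedness $\tilde{\mathcal S}_n\subset\tilde{\mathcal S}_K$ supplies the justification the paper leaves implicit.

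Your remark on the constant is also right. The paper's own \eqref{ss2} defines $C_1$ through $\mathbb{E}\|\hat\phi_T\|_2\le\sqrt{\varepsilon_0}$, so the correct constant is $2\sqrt{\varepsilon_0}/(\lambda_T-\lambda_{T+1})$. The stated $2\varepsilon_0/(\lambda_T-\lambda_{T+1})$ is valid only when $\varepsilon_0\ge 1$.
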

\begin{proof}
\begin{align*}
\|\hat \phi_{K} - \tilde \phi_{K}\|_2 &= \|P_{\sn}\hat{\phi} - P_{\tsn}\hat{\phi}\|_2 \\
&= \|P_{\sn}\hat{\phi} - P_{\tsn}(\hat \phi_{K} + (\hat \phi - \hat \phi_{K}))\|_2 \\
&\leq \|P_{\tsn}(\hat \phi - \hat \phi_{K})\|_2 + \|(P_{\sn} - P_{\tsn})\hat \phi_{K}\|_2.
\end{align*}

Due to the projector $\|P_{\tsn}\| \leq 1$, $\mathbb{E}[\|\hat \phi - \hat \phi_{K}\|_2] \leq \sqrt{\varepsilon_{K}}$, the first part satisfies
\begin{align}\label{ss1}
 \mathbb{E}[\|P_{\tsn}(\hat \phi - \hat \phi_{K})\|_2] \leq \sqrt{\varepsilon_{K}}.   
\end{align}
and the following estimate of the second part holds,
\begin{align*}
\mathbb{E}[\|(P_{\sn} - P_{\tsn})\hat \phi_{K}\|_2]&\leq \mathbb{E}[\|(P_{\sn} - P_{\tsn})(\hat \phi_{T}+\hat \phi_{K}-\hat \phi_{T})\|_2]  \\
&\leq \mathbb{E}[\|(P_{\sn} - P_{\tsn})\hat \phi_{T}\|_2]+\mathbb{E}[\|(P_{\sn} - P_{\tsn})(\hat \phi_{K}-\hat \phi_{T})\|_2].
\end{align*}
With the aid of Davis-Kahan Lemma \ref{DK}, we have
\begin{align}\label{ss2}
\mathbb{E}[\|(P_{\sn} - P_{\tsn})\hat \phi_{T}\|_2]\leq \mathbb{E}[\|(P_{\mathcal{S}_{T}} - P_{\tilde{\mathcal{S}}_{T}})\hat \phi_{T}\|_2]\leq \frac{2\lambda_\mn}{\lambda_{T}-\lambda_{{T}+1}}\mathbb{E}[\|\hat \phi_{T}\|_2]:=C_1\lambda_\mn,
\end{align}
and 
\begin{align}\label{ss3}
\begin{aligned}
    \mathbb{E}[\|(P_{\sn} - P_{\tsn})(\hat \phi_{K}-\hat \phi_{T})\|_2]&\leq
\sum_{i={T}+1}^\mn \mathbb{E}[|a_i|\sqrt{\lambda_i} \cdot \|(P_{\sn} - P_{\tsn})u_i\|_2] \\
&\leq \sum_{i={T}+1}^\mn \sqrt{\lambda_i} \cdot \|(P_{\mathcal{S}_i} - P_{\tilde{\mathcal{S}}_i})u_i\|_2 
\leq \sum_{i={T}+1}^\mn \frac{2\sqrt{\lambda_i}\|C_\delta\|_2}{\lambda_i - \lambda_{i+1}}\\
&\leq \frac{2L}{L-1}\sum_{i={T}+1}^\mn \frac{\|C_\delta\|_2}{\sqrt{\lambda_i}} \leq  \frac{2L}{L-1}\sum_{i=1}^\mn  \frac{\lambda_\mn}{\sqrt{\lambda_i}} \leq C_2 \mn \sqrt{\lambda_\mn}.
\end{aligned}
\end{align}
Combining \eqref{ss1}--\eqref{ss3}, we are ready to have
\[
\mathbb{E}[\|\hat \phi_{K} - \tilde \phi_{K}\|_2]\leq\sqrt{\varepsilon_{K}} + C_1 \lambda_\mn +C_2 \mn \sqrt{\lambda_\mn}.
\]
The proof is completed.
\end{proof}

\section{Numerical experiments}\label{sec:numerical}
\setcounter{equation}{0}
In this section, we present various numerical examples to show the effectiveness of our CB-ROM. First, the data-driven POD method is employed to construct the covariance matrix. We consider the scenarios including full-aperture incidence, limited-aperture incidence, extension to multiple scatterers, and simulations with noisy data. Then, we report numerical results obtained using the physics-informed covariance formula. Finally, several experiments are carried out to evaluate the performance of the proposed GNN.

To avoid repetition, we remark here that the comparison of the CB-ROM and the conventional BIE is performed in the following numerical experiments. Specifically, for a given scatterer and incident wave, we compute the relative error of the near field at a prescribed point using both the CB-ROM (with varying numbers of reduced bases derived from the obtained covariance matrix) and the BIE (with varying numbers of boundary nodes). The comparison is illustrated through line plots showing the relative error (vertical axis) as a function of the number of reduced bases (for CB-ROM) or boundary nodes (for BIE). The incident wave direction (indicated by a blue arrow) and the observation point (marked by a black star) are clearly specified in each case.
\subsection{Random POD approach}
In all experiments involved in this subsection, the covariance matrix is constructed using the random POD method introduced in Section \ref{POD}. The wavenumber is $k=2\pi$ unless otherwise specified. 
\subsubsection{Case of single scatterer}
The first experiment focuses on a single pentagram-shaped scatterer. We first construct the correlation matrix in the full-aperture case, where the heatmap of its magnitude is exhibited in Figure \ref{f9}(a). We observe that the constructed correlation matrix obviously captures the five prominent features of the given scatterer. To illustrate the reduction effectiveness of the proposed CB-ROM, we compare it with the BIE in Figure \ref{f9}(b). The numerical results demonstrate that our CB-ROM achieves a significantly faster convergence rate. In other words, to attain the same level of precision, the CB-ROM operates on a smaller-scale linear system to obtain the contrast source. We also perform the comparison in the limited-aperture case where the incidence angle is restricted in $(\pi,\frac{3\pi}{2})$. The corresponding numerical results are depicted in Figure \ref{f9}(c)--(d). In this setting, the contrast sources at different points show stronger correlation, and our CB-ROM is proved to be more powerful.
\begin{figure}[htbp]
    \centering
    % \subfloat[]{\includegraphics[width=0.25\linewidth]{pic/starmapfull1.jpg}}
    \subfloat[]{\includegraphics[width=0.25\linewidth]{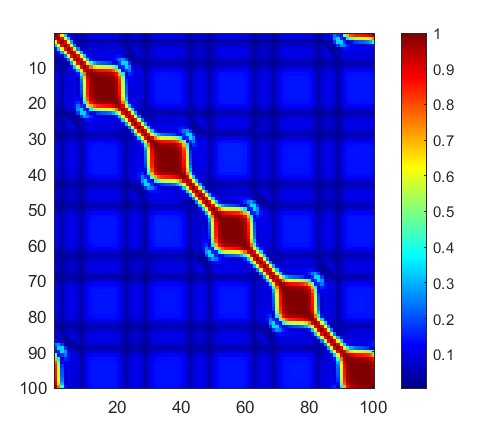}}
    \subfloat[]{\includegraphics[width=0.25\linewidth]{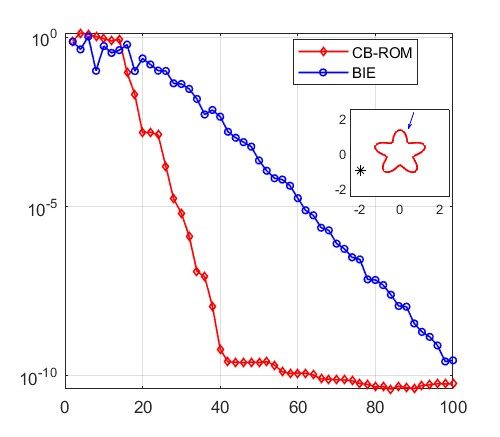}}
    \subfloat[]{\includegraphics[width=0.25\linewidth]{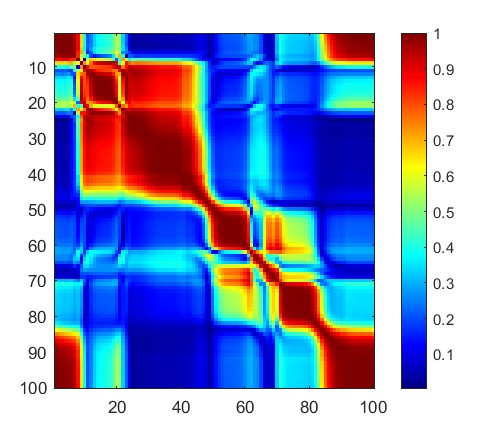}}
    % \subfloat[]{\includegraphics[width=0.25\linewidth]{pic/starmaplimited1.jpg}}
    \subfloat[]{\includegraphics[width=0.25\linewidth]{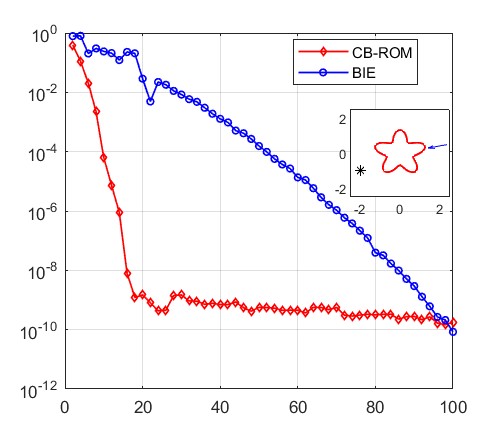}}
    \caption{(a) The magnitude of the correlation matrix for a pentagram-shaped scatterer in the full-aperture incidence case. (b) The comparison of the CB-ROM based on the covariance matrix obtained in the full-aperture incidence and the BIE. (c) and (d) are results corresponding to the limited-aperture incidence in $(\pi,\frac{3\pi}{2})$.}
    \label{f9}
\end{figure}
\begin{figure}[htbp]
    \centering
    \subfloat[]{\includegraphics[width=0.3\linewidth]{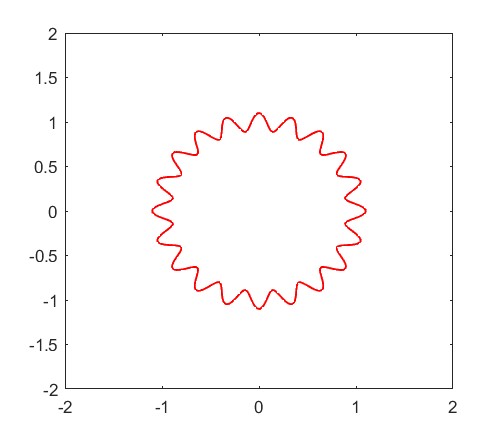}}\quad
    \subfloat[]{\includegraphics[width=0.3\linewidth]{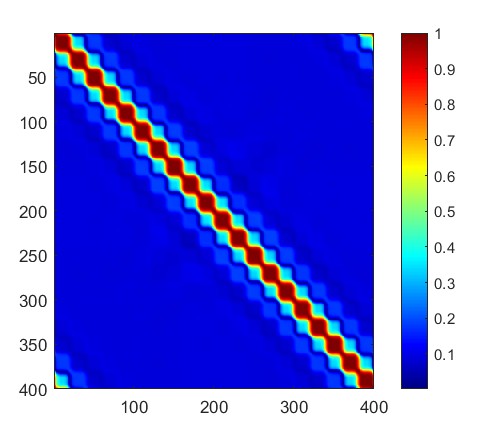}}\quad
    \subfloat[]{\includegraphics[width=0.3\linewidth]{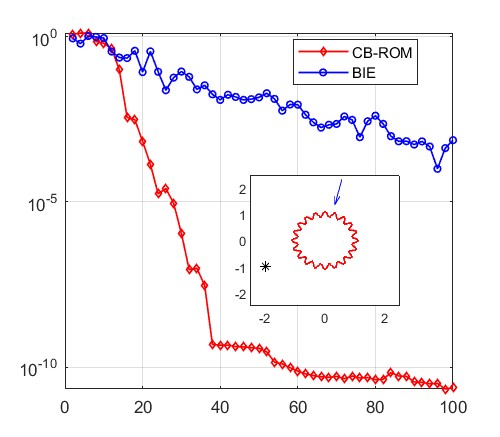}}
    \caption{(a) Shape of the scatterer with 20 convex bumps. (b) The magnitude of the correlation matrix. (c) The comparison of the CB-ROM based on the covariance matrix obtained in the full-aperture incidence and the BIE.}
    \label{f10}
\end{figure}
\begin{figure}[htbp]
    \centering
    \subfloat[]{\includegraphics[width=0.3\linewidth]{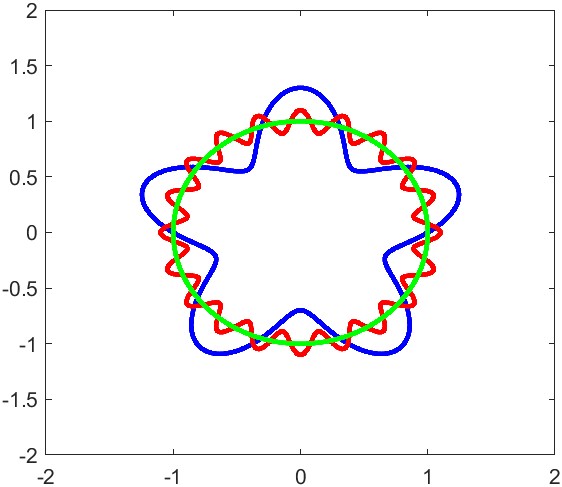}}\quad
    \subfloat[]{\includegraphics[width=0.3\linewidth]{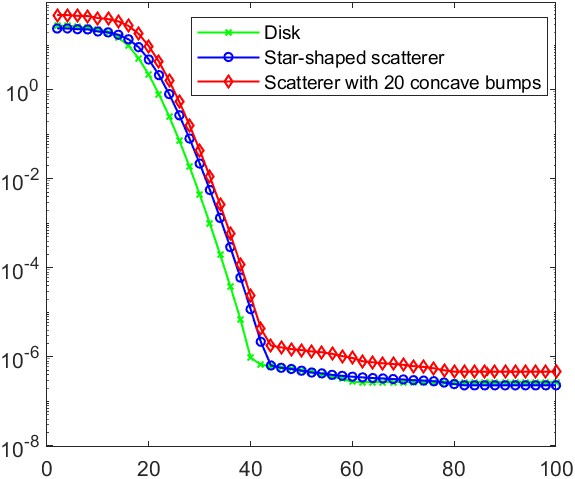}}\quad
    \subfloat[]{\includegraphics[width=0.3\linewidth]{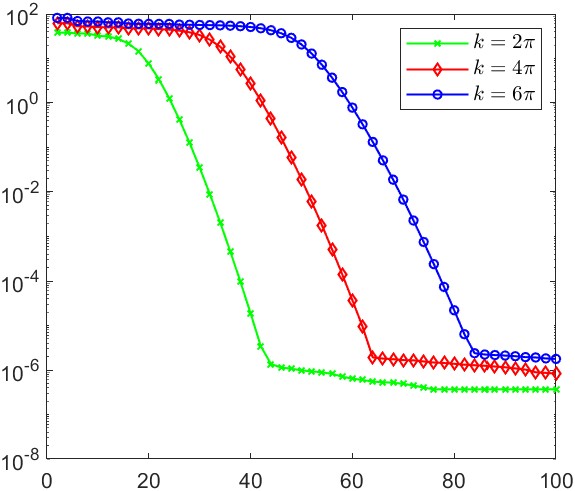}} 
    % \subfloat[]{\includegraphics[width=0.23\linewidth]{pic/exp1highfre.jpg}}
    \caption{(a) Three scatterers with comparable characteristic size but different boundary complexity. (b) The convergence factor $\sqrt{\varepsilon_{\mn}}=\sqrt{\sum_{n=\mn}^N\lambda_n}$ for three scatterers in (a) under the full-aperture incidence, where the horizontal coordinate indicates $K$.  (c) The convergence factor $\sqrt{\varepsilon_{\mn}}$ for the pentagram-shaped scatterer under three different wavenumbers, i.e., $k=2\pi,\,4\pi,\,6\pi$.}
    \label{f11}
\end{figure}
We then consider a more complex scatterer, a disk with 20 convex bumps shown in Figure \ref{f10}(a), in the full-aperture case. The yielding numerical results are reported in Figure \ref{f10}(b)--(c). These results show that the correlation matrix obtained by the POD identifies the geometric features of the scatterer. Moreover, comparing Figure \ref{f9}(b) with Figure \ref{f10}(c), we observe that the convergence rate of the CB-ROM for these two scatterers is nearly unchanged, whereas the performance of the BIE is degraded. To further illustrate this stability, we perform the CB-ROM for three scatterers with the same characteristic size but different boundary complexity under the full-aperture incidence scenario, and evaluate the convergence factor $\sqrt{\varepsilon_{\mn}}$. The numerical results, as shown in Figure \ref{f11} (b), indicate that the convergence rates for these three scatterers remain relatively stable, implying that the dominant eigenvalues of their covariance matrices are comparable. For comparison, we examine the convergence factor for the pentagram-shaped scatterer under incident waves at three different frequencies, with the corresponding results presented in Figure \ref{f11}(c). We observe that the number of dominant eigenvalues increases with the frequency. This behavior is consistent with the classical Rayleigh limitation, according to which the number of dominant eigenvalues is approximately proportional to the scatterer size divided by half the wavelength.

\subsubsection{Case of multiple scatterers}
In this subsection, we present a numerical experiment involving multiple scatterers. Specifically, $D$ consists of nine disjoint scatterers, as illustrated in Figure \ref{f12}(a), with each scatterer discretized by $50$ point. Then, we employ the POD to obtain its correlation matrix, whose magnitude is visualized in Figure \ref{f12}(b). This result suggests that the correlation matrix effectively encodes the multiple scattering effects. Similar to the case of a single scatterer, we compare the CB-ROM and the BIE for near-field computation, with numerical results presented in Figure \ref{f12}(c). We observe that the BIE requires almost all 450 degrees of freedom to achieve a precision of $10^{-7}$, whereas the CB-ROM can attain the same accuracy with less than $50$ reduced bases. This efficiency stems from the fact that bases from the correlation matrix capture the principal modes of the multiple scattering phenomenon under plane wave illumination.
\begin{figure}[htbp]
    \centering
    \subfloat[]{\includegraphics[width=0.3\linewidth]{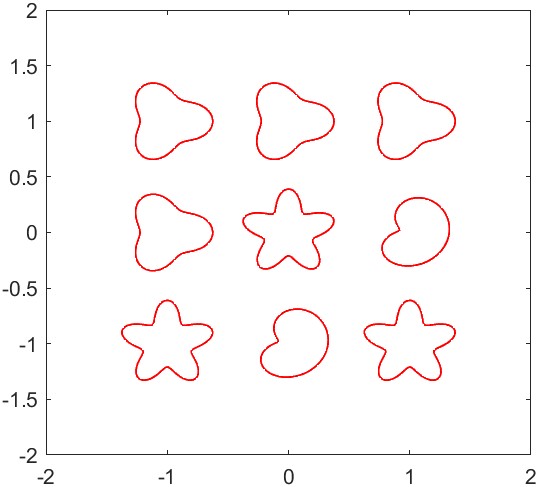}}\quad
{\includegraphics[width=0.3\linewidth]{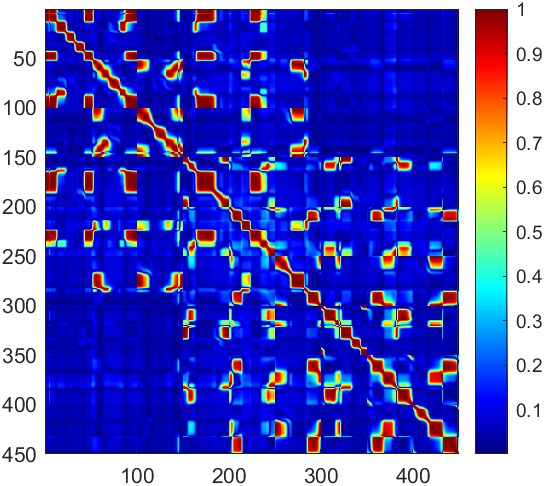}}\quad
    \subfloat[]{\includegraphics[width=0.308\linewidth]{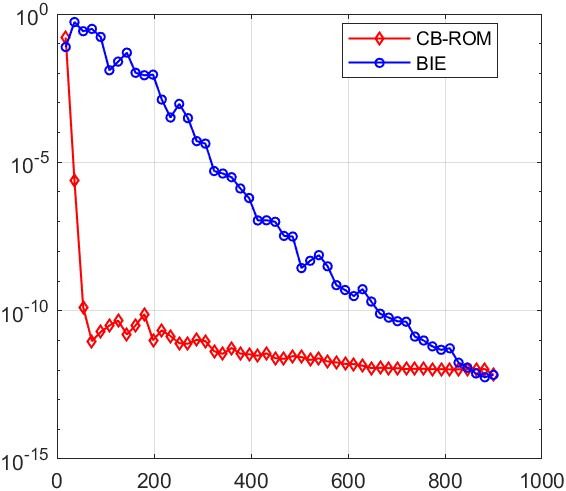}}
    \caption{(a) The case of nine scatterers. (b) The magnitude of the correlation matrix. (c) The comparison of the CB-ROM based on the covariance matrix obtained in the full-aperture incidence and the BIE.}
    \label{f12}
\end{figure}

\begin{remark}\label{cost}
    \textbf{Comparison of computation cost between BIE and CB-ROM:} Assume that there exists $Q$ incident waves with corresponding scattered field needed to be computed at $P$ observation points. Let $N$ denote the number of discretization points in BIE required to achieve a prescribed accuracy $\epsilon$. Since the inverse of $\mathbb{G}$ in \eqref{eq:highf} can be precomputed during the offline stage at one-time cost $\mathcal{O}(N^3)$, then the online computation cost for each incidence is $\mathcal{O}(N^2+PN)$.

    For the reduced-order model \eqref{rom1}, assume that $K$ bases are chosen in the proposed CB-ROM. During the offline stage, we first precompute the scattered fields at given $P$ observation points for each density basis 
    % , forming a matrix $A\in\mathbb{C}^{P\times K}$ 
    incurring a cost of $\mathcal{O}(PNK)$, followed by the inverse of $\mathbb{G}^{\rm ROM}$ at a cost of $\mathcal{O}(K^3)$.  In the online procedure, for each incident wave, it is first projected onto the basis space at a cost of $\mathcal{O}(NK)$, then the coefficients are determined at a cost of $\mathcal{O}(K^2)$, and finally field values are computed at a cost of $\mathcal{O}(PK)$. This results in a total online cost of $\mathcal{O}(NK+K^2+PK)$. While $Q$ is sufficiently large, the online stage dominates the total computational effort. In this regime, the complexity ratio between the BIE and proposed CB-ROM scales as $\mathcal{O}\left((\frac{N}{K})^2\right)$, highlighting the significant speedup offered by the CB-ROM. 
 % which means ROM saves considerable computational cost if the high-quality basis space is obtained satisfying $K<<N$.  
 
% As mentioned in Remark \ref{cost}, the computation complexity ratio of the BIE and the CB-ROM is $\mathcal{O}\left((\frac{N}{{K}})^2\right)$, where ${K}$ indicates the number of the reduced bases in our CB-ROM. 
Recalling the above example involved the scatterer with $20$ convex bumps, we can see from Figure \ref{f10}(c) that the precision of $10^{-3}$ corresponds to $N=100$ and ${K}=20$. It indicates the computation cost of the BIE is $25$ times of that of the CB-ROM. The numerical results shown in this section suggest that this acceleration advantage is further enhanced in the multiple-scatterers case and the limited-aperture case.
\end{remark}

\subsubsection{Nosiy case}\label{noise}
Consider adding a noise random matrix $\Delta C$ with noise level $\|\Delta C\|_2\approx10^{-2}$ to the high-fidelity covariance matrix for different shapes. As seen in Figure \ref{f13}, the near-field error keeps decaying until the eigenvalue is near the noise level, which is consistent with our stability analysis in Section \ref{stable} and show the robustness of the CB-ROM. 
\begin{figure}[H]
    \centering
    \subfloat[]{\includegraphics[width=0.3\linewidth]{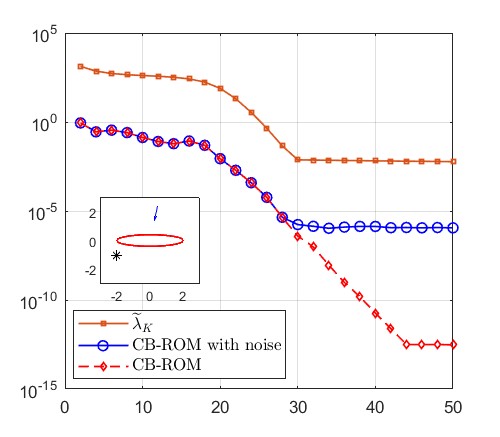}}\quad
    \subfloat[]
{\includegraphics[width=0.3\linewidth]{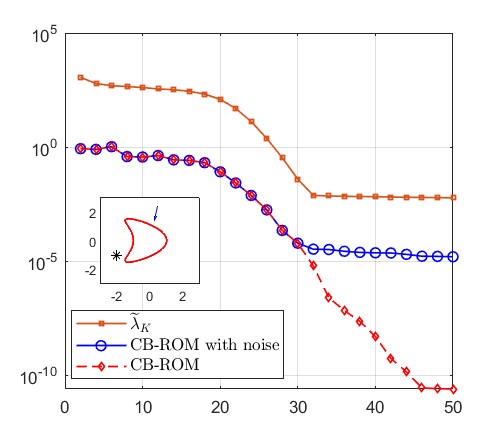}}\quad
    \subfloat[]{\includegraphics[width=0.3\linewidth]{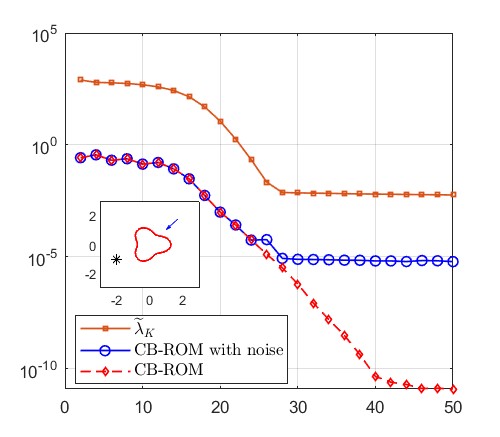}}
    \caption{Noise level $\sigma=10^{-2}$ with $\tilde{\lambda}_\mn$ denoting the $\mn$-th eigenvalue of the noisy covariance matrix $\tilde{C}$. }
    \label{f13}
\end{figure}
\subsection{Physics-informed intuitive formula}\label{subsec:PI-formula-num}
We conduct some numerical experiments to demonstrate the effectiveness of the physics-informed covariance formula. That is, we first apply the formulas introduced in Section \ref{subsec:pi-formula} to generate the covariance matrix for cases including a single scatterer and multiple scatterers, respectively. The near field is then calculated using the CB-ROM and the BIE, with numerical results visualized in Figure \ref{f14}(a)--(b). The convergence rate of the CB-ROM remains faster than that of the BIE. Furthermore, we consider a scatterer with a complex boundary as shown in Figure \ref{f14}(c). In this case, the capacity of the CB-ROM is slightly weakened, but still suggests that the physics-informed formula effectively captures the low-frequency representation of the contrast source, thereby providing the first few dominant modes.
\begin{figure}[htbp]
    \centering
    \subfloat[]{\includegraphics[width=0.3\linewidth]{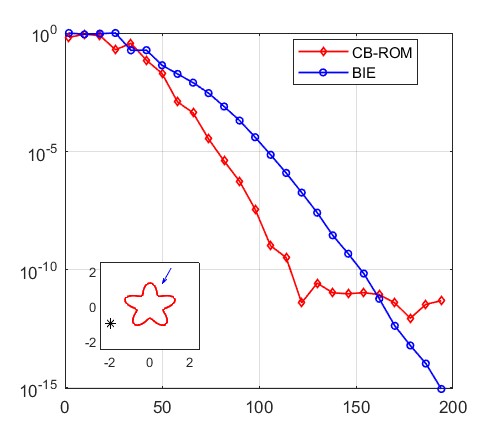}}\quad
    \subfloat[]{\includegraphics[width=0.3\linewidth]{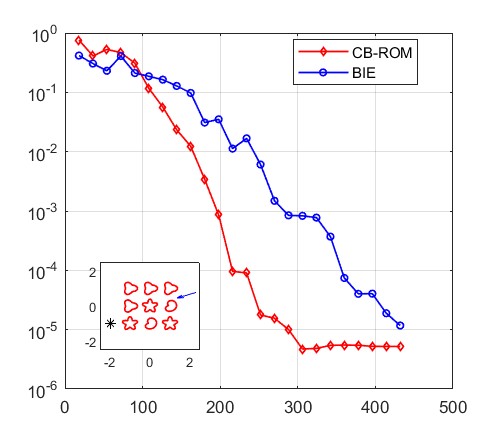}}\quad
    \subfloat[]{\includegraphics[width=0.3\linewidth]{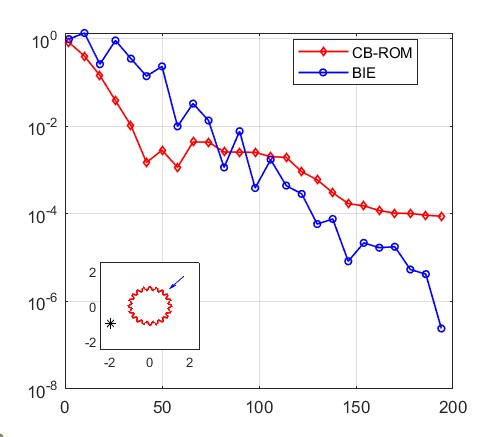}}
    \caption{The comparison of the CB-ROM based on the covariance matrix obtained by the physics-informed formula and the BIE. In (a) and (b), the wavenumber $k=4\pi$. (c) The wavenumber $k=2\pi$.}
    \label{f14}
\end{figure}

\subsection{Numerical results of the GNN}
In this subsection, we aim to construct the reduced bases for our CB-ROM via the covariance matrix learned by the proposed GNN introduced in Section \ref{sec:gnn}. The numerical setups are specified as follows. First, we focus on the full aperture case in this subsection, i.e., the scatterer is illuminated by $1000$ plan waves with wavenumber $k = 8\pi$ and incident directions admitting a uniform distribution in $[0,\,2\pi)$. Second, we generate $3200$ samples, in which $3000$ constitute the training set ${T}_t$ and the remaining $200$ comporise the test set ${T}_e$. Each sample $D$ consists of one star-like obstacle, whose parameteric form is given by
\begin{equation}\label{eq:trainset}
    \partial D = \{r(t) (cos(t), sin(t)),\,t\in[0,2\pi]\},\quad  r(t) = r_0+\sum^{N_k}_{k=1}\left(a_k sin(t)+b_k cos(t)\right)
\end{equation}
with $N_k = 5$. We pick the basic radius $r_0$ uniformly at random in the interval $[0.8,\,1.2]$. Moreover, the Fourier coefficients $a_k,\,b_k\,(k=1,\ldots,5)$ are sampled independently in $[-0.1,\,0.1]$. Third, the true covariance matrix, denoted as $\Sigma(D)\in\mathbb{C}^{100\times 100}$, for each scatterer $D$ is obtained via POD with $M=1000$ and $N=100$ in \eqref{eq:cov-num}. In particular, we use the combined potential \eqref{equ:integratialrep-com} for solving the contrast source, with $\eta$ taken as the wavenumber. Finally, we specify the adopted loss function in the training process. The classical mean square error (MSE) is first used to punish the local point-wise discrepancy of the covariance matrix. To further capture the structural feature of the target matrix, the Structural Similarity Index Measure (SSIM) \cite{SSIM-2004}, which is widely applied to extract the global features of images, is added into our loss function. Consequently, the empirical objective based on the training set is to find the optimal $\Theta^*$ by solving the optimization problem
\begin{equation}\label{eq:lossfun}
    \min_{\Theta}\mathcal L(\Theta):= \frac{1}{|{T}_t|}\sum_{D\in {T}_t} \left(\frac{1}{2}\rm{MSE}\left(\mathcal F_\Theta(D), \Sigma(D)\right) + \frac12\rm{SSIM}\left(\mathcal F_\Theta(D), \Sigma(D)\right)\right).
\end{equation}
\begin{figure}[htbp]
    \centering
    \subfloat{\includegraphics[width=0.25\linewidth]{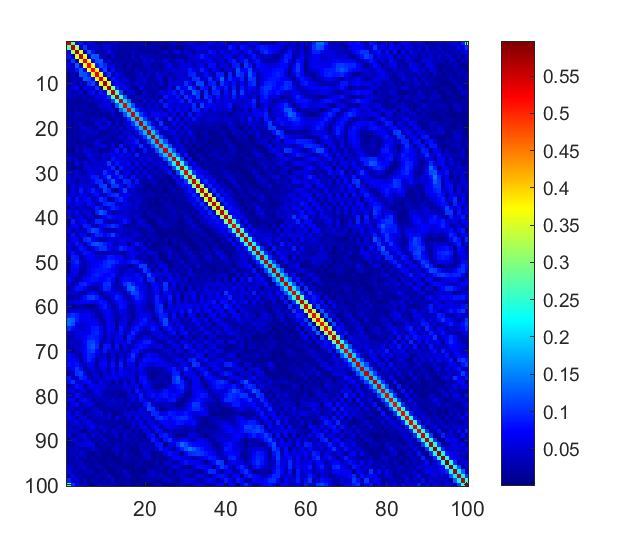}}
    \subfloat{\includegraphics[width=0.25\linewidth]{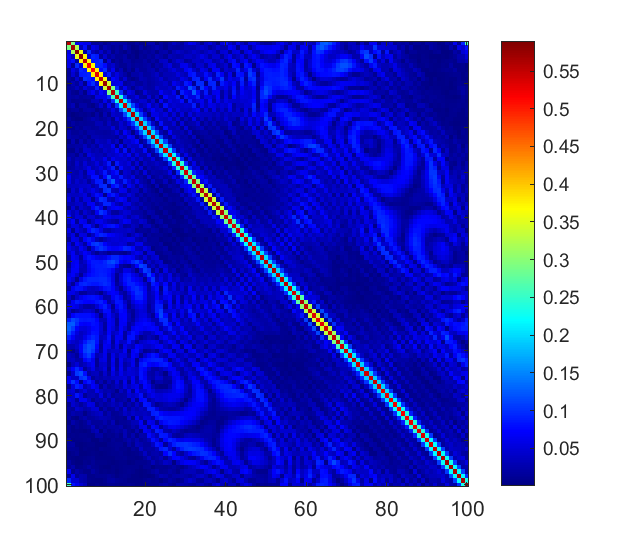}}
    \subfloat{\includegraphics[width=0.25\linewidth]{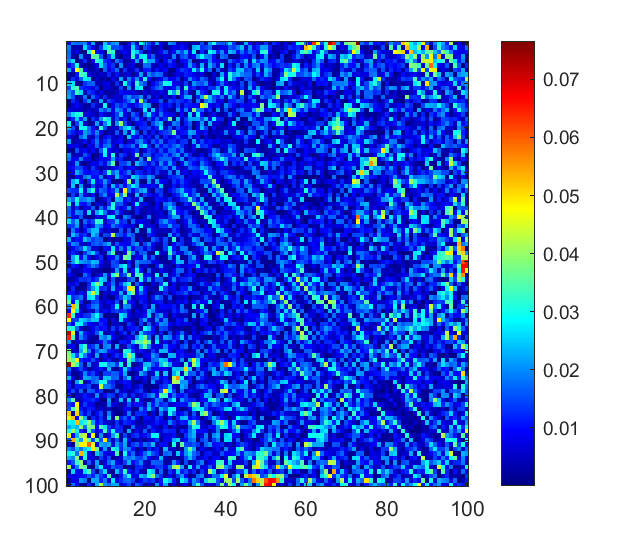}}
    \subfloat{\includegraphics[width=0.28\linewidth]{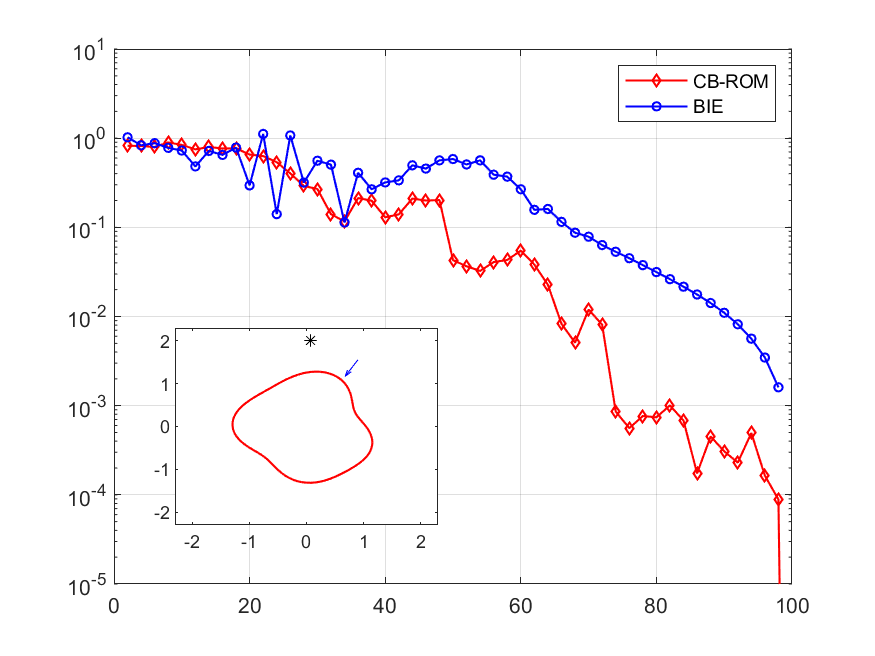}}

    \subfloat{\includegraphics[width=0.25\linewidth]{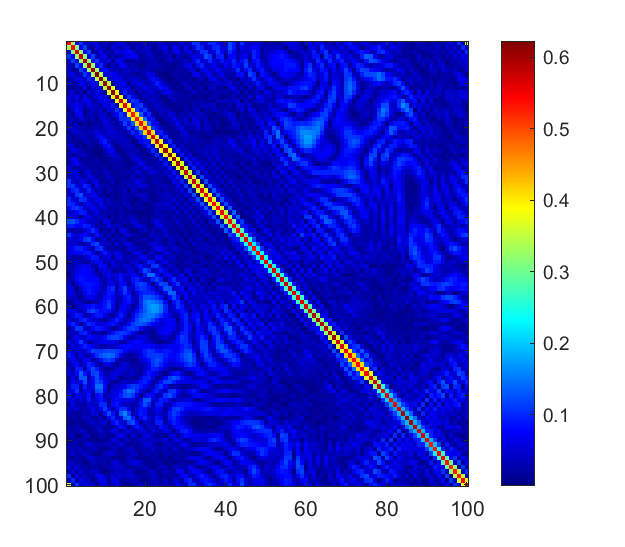}}
    \subfloat{\includegraphics[width=0.25\linewidth]{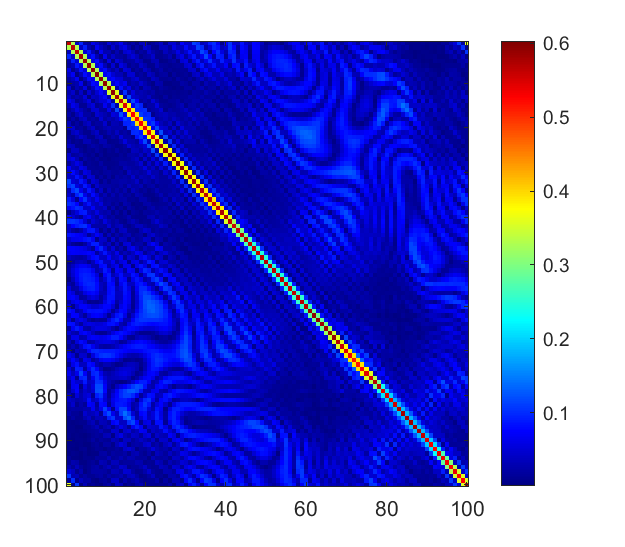}}
    \subfloat{\includegraphics[width=0.25\linewidth]{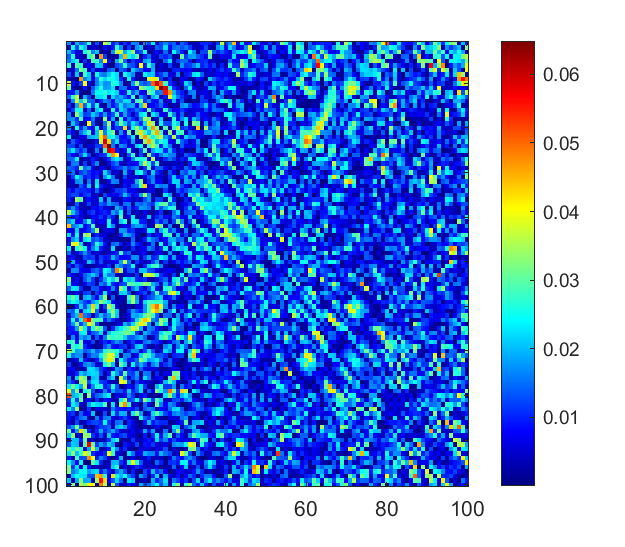}}
    \subfloat{\includegraphics[width=0.28\linewidth]{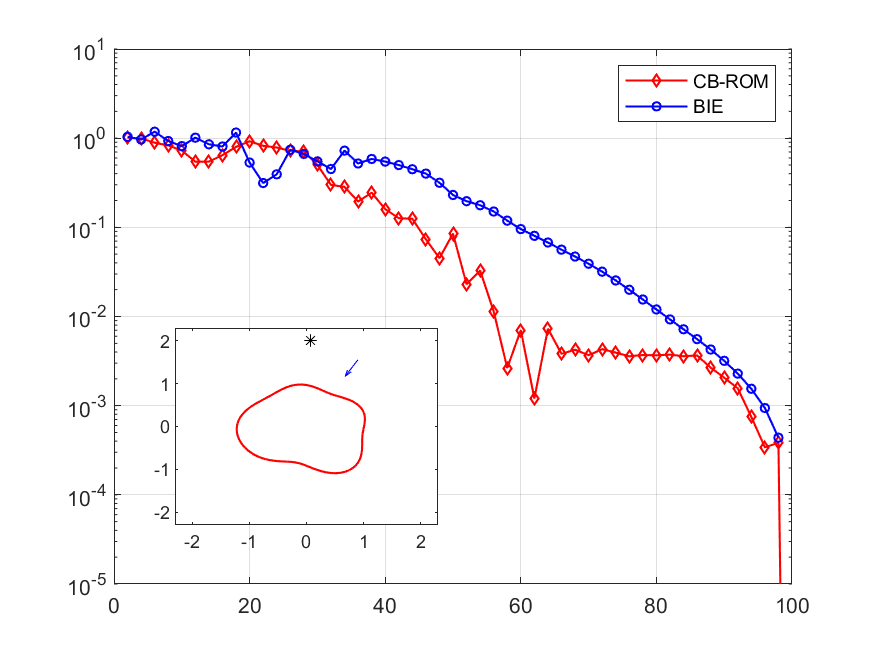}}

    \subfloat{\includegraphics[width=0.25\linewidth]{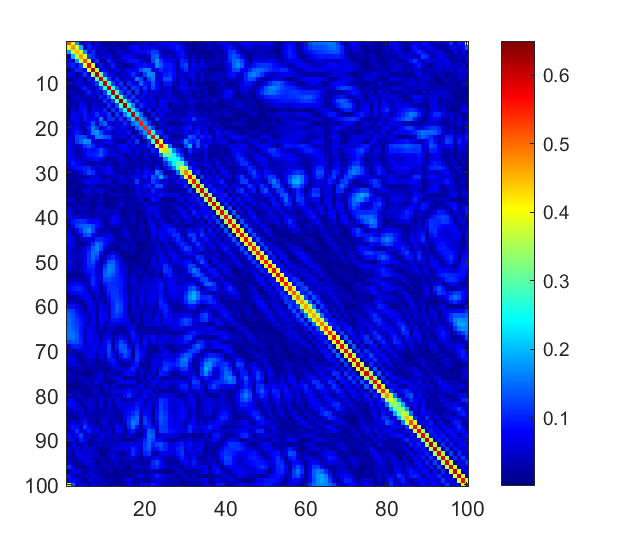}}
    \subfloat{\includegraphics[width=0.25\linewidth]{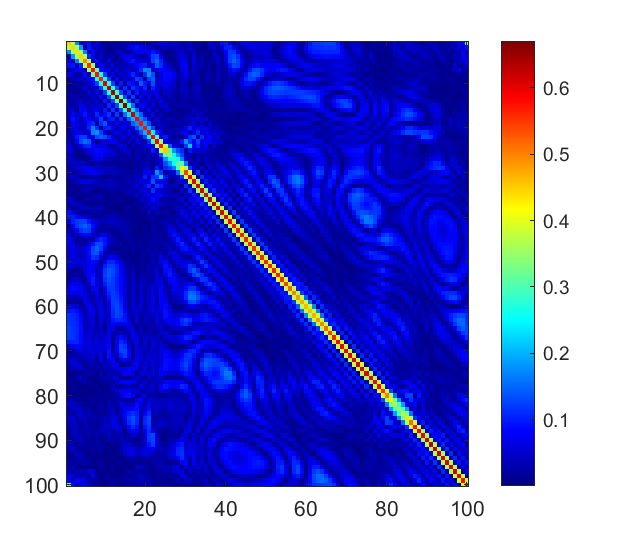}}
    \subfloat{\includegraphics[width=0.25\linewidth]{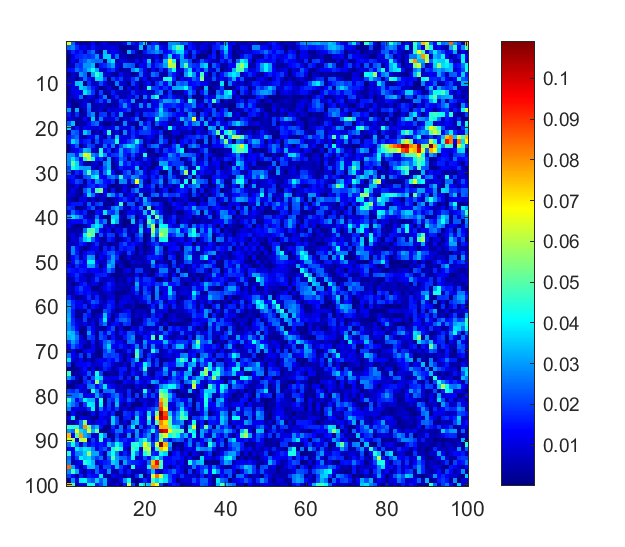}}
    \subfloat{\includegraphics[width=0.28\linewidth]{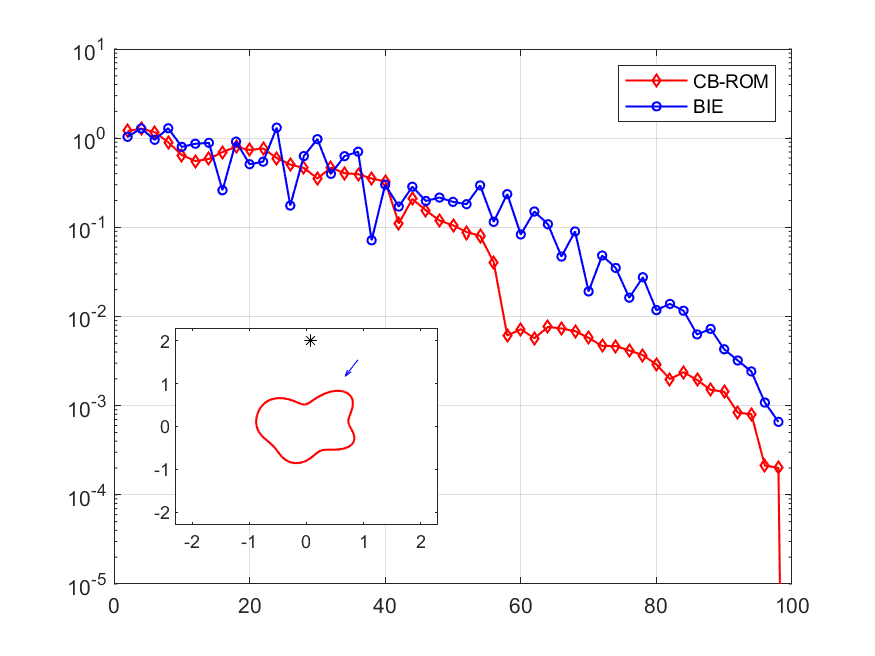}}
    \caption{Numerical results for three representative samples in the test set (first to third rows). For each sample, we present the magnitude of the true covariance matrix (first column), the covariance matrix learned by the proposed GNN (second column), and the absolute error between them (third column). The last column compares the CB-ROM based on the learned covariance matrix and the BIE.}
    \label{GNN-Results}
\end{figure}

Next, let us describe the graph-level details. Each node feature vector is denoted as $\bm x^v_i = [h^i_x,\,h^i_y,\,\kappa_i]^T$, where $(h^i_x,\,h^i_y)$ and $\kappa_i$ indicate the coordinate and curvature of the corresponding node, respectively. As mentioned in Section \ref{sec:gnn}, we need to initialize the edge features for all edges. It is noteworthy that the covariance matrix for a unit disk is closely related to $e^{i(\theta-\theta^\prime)}$, where $(\theta-\theta^\prime)$ is the difference of the discretized angles corresponding to two nodes; see \eqref{eq:covdisk} for details. Inspired by this observation, we initialize each edge $\bm x^e_{ij}$ as $[\cos\beta_{ij}, \sin \beta_{ij}]^T$ with $\beta_{ij}$ the angle difference between $v_i$ and $v_j$. 

We highlight that the covariance matrix $\mathcal F_{\Theta^*} (D)$ generated by the trained GNN might not exactly match the true covariance matrix $\Sigma(D)$. The possible small discrepancy introduces high-frequency noise in the exact covariance matrix, which may significantly affect the small singular values and the corresponding singular vectors. To mitigate the issue, we append a lightweight CNN \cite{GNN-Denoisor-2022} after the GNN to perform an additional denoising step. For notational simplicity, we continue to denote the combined output for a given $D$ as $\mathcal F_{\Theta^*} (D)$. In addition, the mean density function is required to construct the complete reduced basis. We assume that the mean function is known a priori. In practice, it is easily obtained by solving a linear system \eqref{eq:boundary} once.

\begin{figure}[htbp]
    \centering
    \subfloat{\includegraphics[width=0.25\linewidth]{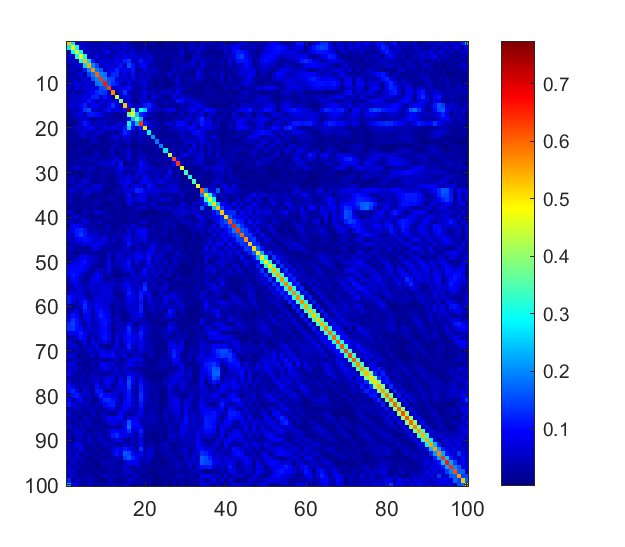}}
    \subfloat{\includegraphics[width=0.25\linewidth]{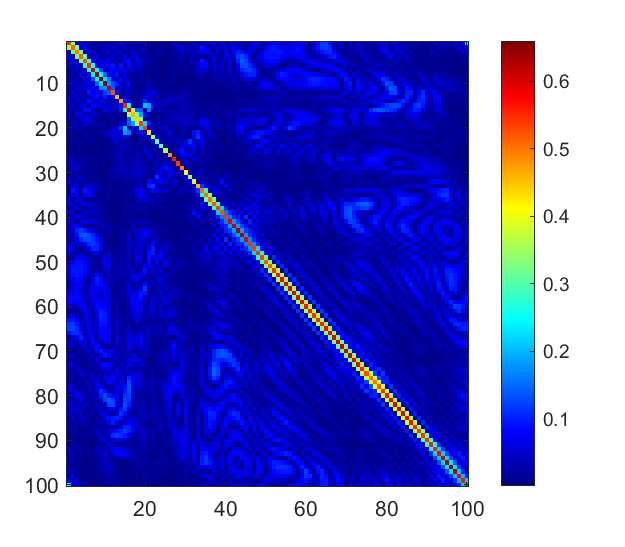}}
    \subfloat{\includegraphics[width=0.25\linewidth]{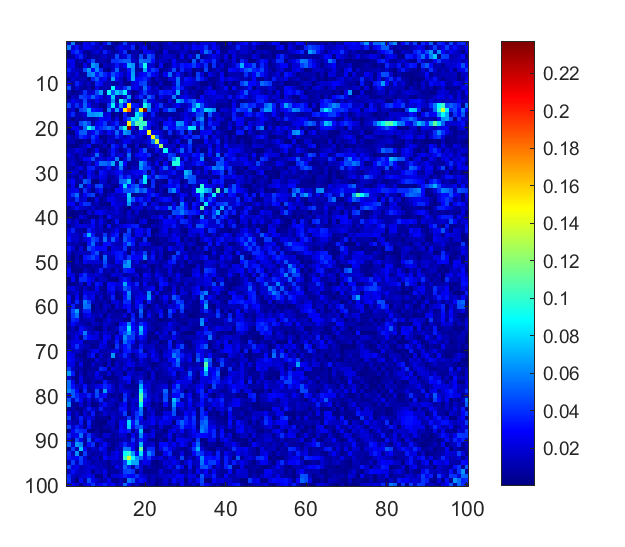}}
    \subfloat{\includegraphics[width=0.28\linewidth]{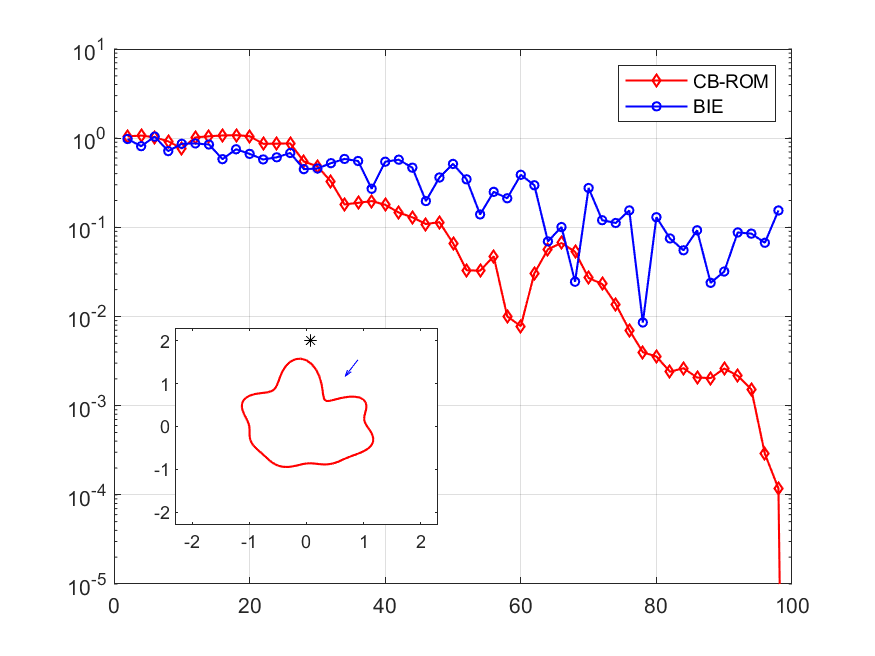}}

    \subfloat{\includegraphics[width=0.25\linewidth]{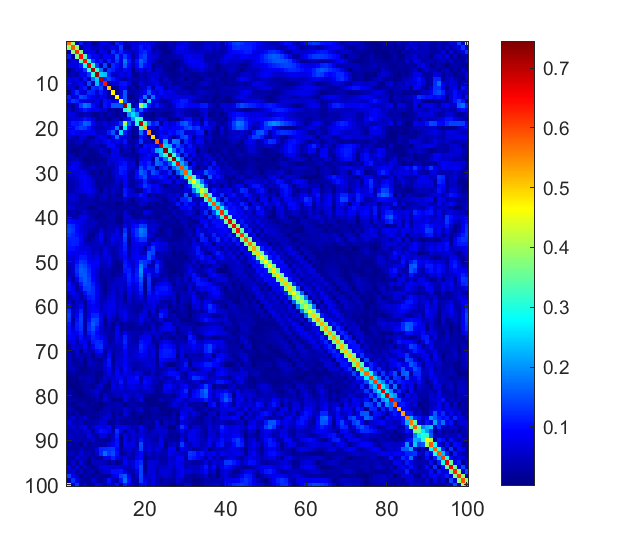}}
    \subfloat{\includegraphics[width=0.25\linewidth]{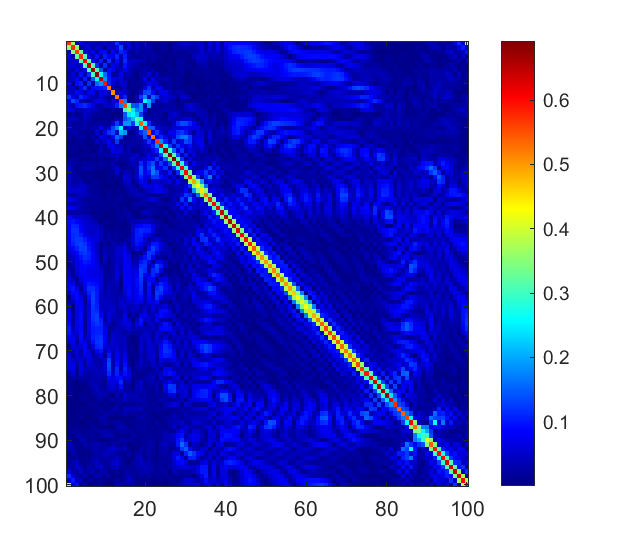}}
    \subfloat{\includegraphics[width=0.25\linewidth]{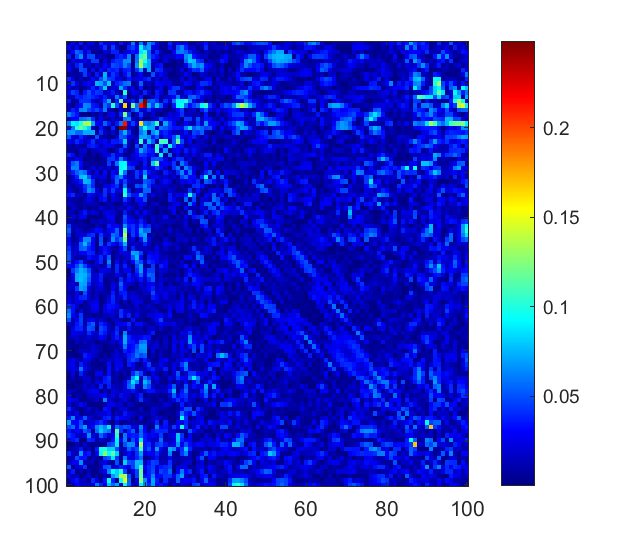}}
    \subfloat{\includegraphics[width=0.28\linewidth]{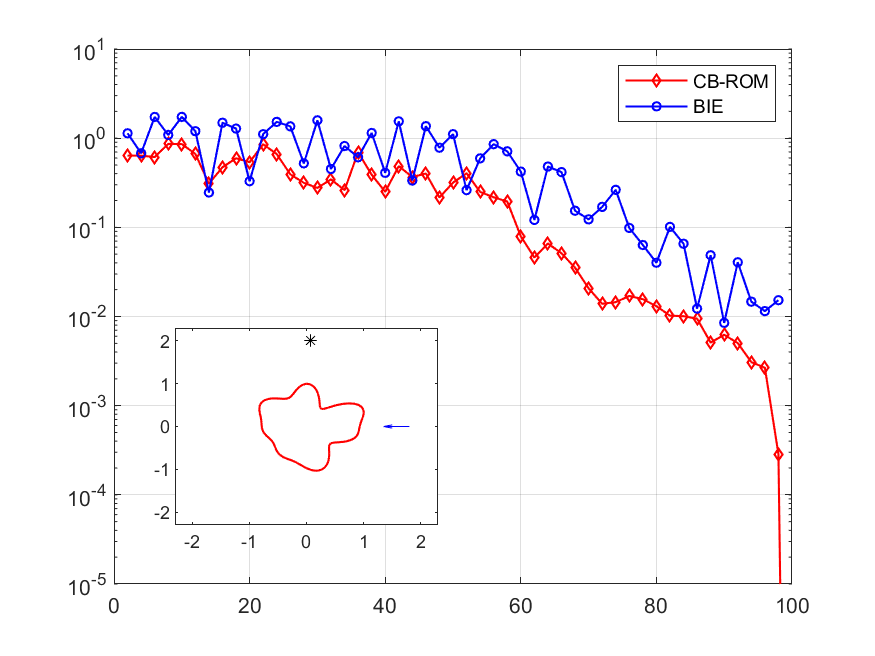}}

    \subfloat{\includegraphics[width=0.25\linewidth]{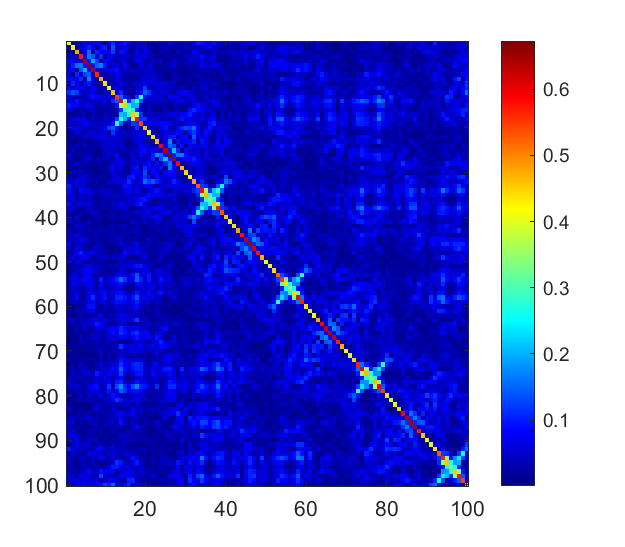}}
    \subfloat{\includegraphics[width=0.25\linewidth]{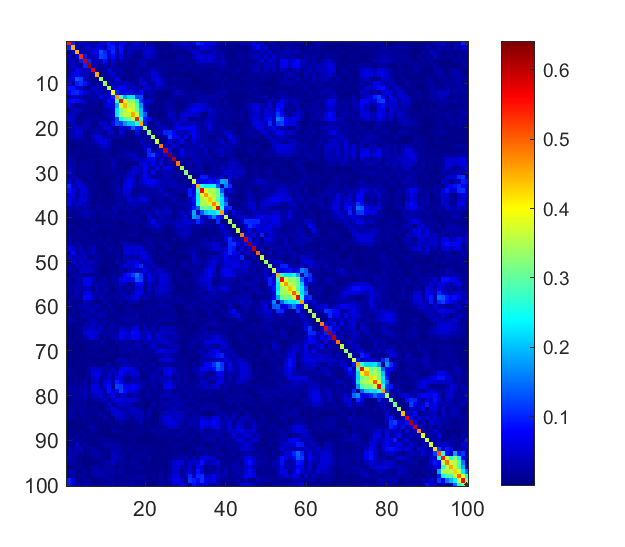}}
    \subfloat{\includegraphics[width=0.25\linewidth]{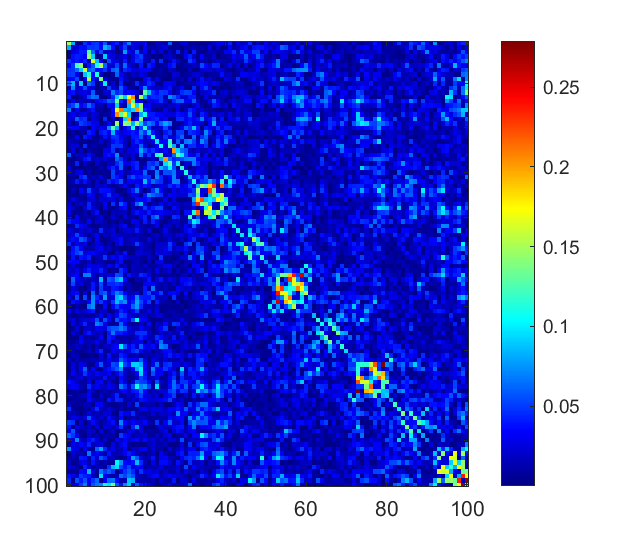}}
    \subfloat{\includegraphics[width=0.28\linewidth]{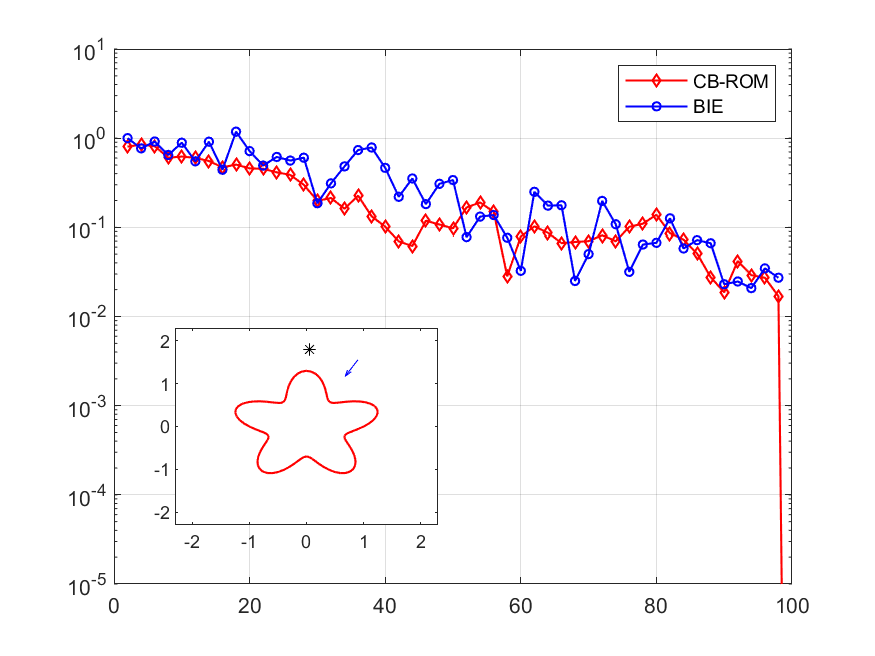}}
    \caption{Numerical results for testing the generalization capacity of the GNN. The first two rows shows the resutls for two samples generated by \eqref{eq:trainset} with $N_k = 7$, and the last row corresponds to a pentagram-shaped scatterer. }
    \label{GNN-Results-Generalization}
\end{figure}

After training, we evaluate the trained model by constructing the covariance matrices for three distinct random samples in the test set. Then, we simulate the scattering of the plane wave, incident from the direction $(\cos(\frac{\pi}{3}),\,\sin(\frac{\pi}{3}))$, through the selected samples. The scattered field is then calculated using the CB-ROM based on the learned covariance matrices. The numerical results are depicted in Figure \ref{GNN-Results}. For each row, the first column shows the ground truth covariance matrix, the second column displays the learned one with our GNN, and the third column quantifies the point-wise absolute error between them. We observe that the absolute error is of order $\mathcal O(10^{-2})$, which shows the accuracy of our trained GNN. To assess the effectiveness of the learned covariance matrix, we compare the CB-ROM based on the learned covariance matrix against the BIE. The comparative results are presented in the last column of Figure \ref{GNN-Results}. It is evident that the CB-ROM using the learned covariance matrix significantly outperforms the BIE. These findings collectively demonstrate the efficacy of our GNN in extracting the underlying covariance matrix for a given scatterer. 

Furthermore, to illustrate the generalization capacity of the proposed GNN, we examine its performance on OOD samples. We first generate two samples using \eqref{eq:trainset} with $N_k = 7$. These two samples exhibit more complex boundaries compared with those in the training set, where $N_k=5$. The third sample is a pentagram-shaped scatterer, which is clearly beyond the training distribution. We use the trained GNN to construct the covariance matrices for these three samples and perform the CB-ROM, with numerical results displayed in Figure \ref{GNN-Results-Generalization}. The results show that the learned GNN constructs the covariance matrices reasonably well for these OOD samples, thereby demonstrating the good generalization of our method. Moreover, it is worth noting that the BIE fails to solve the near field precisely with only $100$ nodes for these complex scatterers, while the CB-ROM based on the learned covariance matrix achieves a high accuracy with a stable convergence rate. 

\section{Conclusion}\label{sec:conclusion}
In this paper, we proposed a covariance-based reduced-order model (CB-ROM) for solving acoustic scattering problems. By modeling the contrast source under random incidence as a spatial random field, we established a rigorous statistical framework for the ROM via the Karhunen-Loève (KL) expansion. The CB-ROM adopts the dominant eigenfunctions of the covariance operator as reduced bases, efficiently capturing the principal modes of the contrast source for a given incidence distribution. Furthermore, we established the convergence properties and numerical stability of the proposed method in expectation. To construct the covariance matrix for an arbitrary scatterer, we developed three complementary strategies: a data-driven proper orthogonal decomposition (POD) approach (highly accurate but computationally intensive and case-specific), a physics-informed analytical formulation (computationally simple but of limited fidelity), and a learning-based graph neural network (GNN) approach (highly efficient and adaptive). Extensive numerical experiments demonstrated the efficacy and robustness of the proposed CB-ROM framework.

This work motivates several promising directions for future research. First, the CB-ROM framework can be extended to scattering problems in heterogeneous media. Second, the methodology is naturally applicable to a broader class of wave phenomena, including elastic and electromagnetic scattering, and holds particular promise for high-dimensional scattering configurations where rapid computational methods are strictly required. Finally, by exploiting the correlation structure of the contrast source, the CB-ROM provides valuable physical insights into multiple scattering mechanisms. This statistical perspective may not only deepen our understanding of resolution limits but also presents new opportunities for advancing super-resolution imaging techniques in inverse scattering problems.

\bibliography{bib.bib}
\bibliographystyle{abbrv} % 使用缩写作者名的样式
\end{document}